%-1395/10/10---------------------------------------------

\documentclass[12pt]{amsart}
\usepackage{amsmath, amsthm, amscd, amsfonts,amssymb}

\setlength{\textwidth}{6.5in} \setlength{\textheight}{8.7in}
\setlength{\evensidemargin}{-0.2in}
\setlength{\oddsidemargin}{-0.2in}

\newtheorem{theorem}{Theorem}[section]
\newtheorem{lemma}[theorem]{Lemma}

\newtheorem{corollary}[theorem]{Corollary}

\theoremstyle{definition}

\theoremstyle{remark}

\numberwithin{equation}{section}

\newfont{\kh}{msbm10}

%%%%%%%%%%%%%%%%%%%%%%%%%%%%%%%%%%%%%%%%%%%%%%%%%%%%%%%%%%%%%
\begin{document}
\title[ Automorphism group of power graphs ]
{A description of Automorphism group of power graphs of  finite groups  }
\author{S. H. Jafari}
\address{S. H. Jafari, \newline Faculty of Mathematics,
Shahrood University of Technology, P. O. Box 3619995161-316,
Shahrood, Iran} \email{shjafari55@gmail.com }

\begin{abstract}
The power graph of a group is the graph whose vertex set is the set
of nontrivial  elements of group, two elements being adjacent if one is a power
of the other. We introduce some way for find  the automorphism groups of
 some graphs. As an application We describe the full automorphism group of the power graph
of all finite groups.
 Also we obtain the full automorphism group of  power graph of abelian, homocyclic  and nilpotent groups.
\end{abstract}

\subjclass[2010]{05C25, 20B25}
 \keywords{power graph; automorphism group; abelian group; nilpotent group.}
\maketitle
%%%%%%%%%%%%%%%%%%%%%%%%%%%%%%%%%%%%%%%%%%%%%%%%%%%%%%%%%%%%%%
\section{Introduction.}

 The directed power graph of a semigroup $S$ was defined by Kelarev and Quinn \cite{kq} as the
digraph $\mathcal{P}(S)$ with vertex set S, in which there is an arc from
$x$ to $y$ if and only if $x\neq y$ and $y$$=$$x^m$ for some positive
integer $m$. Motivated by this, Chakrabarty et al. \cite{css}
defined the (undirected) power graph $\mathcal{P}(S)$, in which distinct
$x$ and $y$ are joined if one is a power of the other.
The concept of power graphs  has been studied extensively
by many authors. For a list of references and the history of this topic, the reader is
referred to [2, 5-10].\\

 Let $L$ be a graph. We denote $V(L)$  and $E(L)$ for
vertices and edges of $L$, respectively. We use $a-b$ if $a$ is adjacent
  to $b$. Also for a subgraph $H$ of
$L$ and $a$$\in$$ V(H)$, we denote $H-a$ for the subgraph
generated by $V(H)-\{a\}$.  The (open) neighborhood $N(a)$ of
vertex $a$$\in$$ V(L)$ is the set of vertices are adjacent to $a$. Also
the closed neighborhood of $a$, $N[a]$ is $N(a)\cup \{a\}$.\\
Throughout this paper, all groups and graphs  are  finite
 and the following
notation is used: $Aut(G)$ denotes the group of automorphisms of $G$;
 $\mathbb{Z}_m$ the cyclic group of order $m$; $\mathbb{Z}_m^n$
  the direct product of $n$ copies
of $\mathbb{Z}_m$. \\

In this paper we describe the automorphism group of the power graph
of finite group. Also we obtain automorphism group of the power graph of abelian, and  homocyclic   groups.

%%%%%%%%%%%%%%%%%%%%%%%%%%%%%%%%%%%%%%%%%%%%%%%%%%%%%%%%%%%%%%%%%%%%%%%%%%%%%%%%%%%%%%%%%%%%%%%%%%%%%%%%%%%%%%%%%%%%%%%%%5

%%%%%%%%%%%%%%%%%%%%%%%%%%%%%%%%%%%%%%%%%%%%%%%%%%%%%%%%%%%%%%%%%%%%%%%%%%%%%%%%%%%%%%%%%%%%%%%%%%%%%%%%%%%5

%%%%%%%%%%%%%%%%%%%%%%%%%%%%%%%%%%%%%%%%%%%%%%%%%%%%%%%%%%%%%%%%%%%%%%%%%%%%%%%%%%%%%%%%%%%%%%%%%%%%%%%%%%5%

\section{automorphism group of graphs }
 In this section we provide some ways for calculating automorphism groups of graphs.
 Let $L_1,L_2$ be two  graphs, a function $f :(V(L_1)\cup E(L_1))\longrightarrow (V(L_2) \cup E(L_2))$ to $V(L)$  is said an isomorphism if $f$ is bijective, $f(V(L_1))$$=$$f(V(L_2)), f(E(L_1))$$=$$f(E(L_2))$  and,  $x-y $$\in$$ E(L_1)$ if and only if  $f(x)-f(y) $$\in$$ E(L_2)$.\\
  We say a subset $H$ of $V(L)$ is an  $MEN$-subset if it is
maximal subset which any two elements of $H$ have equal closed
neighborhood in $L$. We denote $H$ by $\overline{a}$ for any $a $$\in$$ H$.
  We define the weighted graph $\overline{L}$  as follows.\\
   Let $V(\overline{L})$$=$$\{\overline{x}|x$$\in$$ V(L) \}$, $weight(\overline{x})$$=$$|\overline{x}|$, and two vertices $\overline{x}, \overline{y}$  are adjacent if $x$ and $y$ are adjacent in $L$. Also  in weighted graph any automorphism preserves the weight of each element.
  \begin{theorem}[ \cite{j2} {Theorem 2.2}]\label{t1}
For a graph $L$ with $|V(L)|<\infty$,
\begin{center}
{$Aut(L)$$\cong$$ Aut(\overline{L}) \ltimes \prod_{B \in V(\overline{L})}S_{|B|}$.}
\end{center}
\end{theorem}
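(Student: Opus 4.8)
The plan is to realize $Aut(L)$ as an extension of $Aut(\overline L)$ by the within-class permutations and then to split that extension. First I would record the structure of the equivalence relation underlying the $MEN$-subsets: writing $x \sim y$ for ``$N[x] = N[y]$'', this is clearly an equivalence relation whose classes are exactly the $MEN$-subsets $\overline x$, and each such class is a clique in $L$ (if $x \sim y$ with $x \neq y$, then $y \in N[y] = N[x]$ forces $x - y$). I would then check that adjacency descends to classes: when $\overline x \neq \overline y$, the relation $x - y$ holds for one pair of representatives iff it holds for every pair, so the weighted graph $\overline L$ is well defined and $\overline\phi$ will have something well defined to act on.

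Next I would build the comparison homomorphism. Since any $\phi \in Aut(L)$ satisfies $N[\phi(x)] = \phi(N[x])$, it preserves $\sim$ and hence permutes the classes; writing $\overline\phi$ for the induced map, one verifies $\phi(\overline x) = \overline{\phi(x)}$ by applying the inclusion to both $\phi$ and $\phi^{-1}$, so that $|\overline{\phi(x)}| = |\overline x|$ and $\overline\phi$ is a weight-preserving automorphism of $\overline L$. This produces a group homomorphism $\pi : Aut(L) \to Aut(\overline L)$ given by $\pi(\phi) = \overline\phi$.

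Then I would identify the kernel and establish surjectivity. The kernel consists of automorphisms fixing every class setwise; restricting such a map to each class yields an element of $\prod_B S_{|B|}$, and conversely any permutation that acts arbitrarily inside each class while fixing the partition is an automorphism of $L$. This converse is the heart of the matter: it uses that members of a single class are mutually interchangeable (equal closed neighborhoods, each class a clique, adjacency well defined between distinct classes), so any such permutation carries edges to edges and non-edges to non-edges. Hence $\ker\pi = \prod_{B} S_{|B|}$. For surjectivity, given $\psi \in Aut(\overline L)$, weight preservation lets me pick a bijection $\overline x \to \psi(\overline x)$ for each class; assembling these gives a map $\Phi$ that, by the same clique and well-definedness checks, lies in $Aut(L)$ with $\pi(\Phi) = \psi$.

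Finally I would split the short exact sequence $1 \to \prod_B S_{|B|} \to Aut(L) \xrightarrow{\pi} Aut(\overline L) \to 1$. Fixing once and for all an enumeration $b_1^{B}, \dots, b_{|B|}^{B}$ of each class $B$, I send $\psi$ to the lift $s(\psi)$ carrying the $i$-th labelled element of $B$ to the $i$-th labelled element of $\psi(B)$, which is legitimate since $|\psi(B)| = |B|$. A direct computation gives $s(\psi_1\psi_2) = s(\psi_1)\,s(\psi_2)$ and $\pi \circ s = \mathrm{id}$, so $s$ is a homomorphic section; together with the kernel computation this yields $Aut(L) \cong Aut(\overline L) \ltimes \prod_{B \in V(\overline L)} S_{|B|}$. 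I expect the splitting to be the main obstacle: producing a section that is an honest group homomorphism rather than a mere set-theoretic lift requires the coherent labeling above, and the careful verification of the homomorphism property is where the real work lies.
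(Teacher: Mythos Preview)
Your argument is correct and is exactly the standard route to this result: pass to the quotient by the closed-neighborhood equivalence, identify the kernel of the induced map $\pi:Aut(L)\to Aut(\overline L)$ with the direct product of symmetric groups on the classes, check surjectivity, and split the resulting short exact sequence by a coherent labeling of each class. All the verifications you outline (classes are cliques, adjacency is constant on pairs of distinct classes, the labeled lift $s$ is a genuine homomorphism) go through without difficulty.

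There is nothing to compare against, however: the paper does not prove this theorem. It is quoted verbatim as Theorem~2.2 of \cite{j2} and used as a black box; no argument for it appears anywhere in the present manuscript. So your proposal is not an alternative to the paper's proof but rather a self-contained proof of a result the paper merely cites.
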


\begin{theorem}\label{t2}
Let $L$$=$$L_1 \cup L_2 \cup \cdots \cup L_t$  be a finite graph and $L_1 $$\cong$$ L_2 $$\cong$$\cdots $$\cong$$ L_t$. If\\ $\varphi(L_i)$$\in$$ \{L_1, \cdots,L_t\}$ for all $i$$\in$$ \{1, \cdots, t\}$ and $ \varphi $$\in$$ Aut(L)$, then $Aut(L)$$\cong$$ Aut(L_1)\wr S_t$.
\end{theorem}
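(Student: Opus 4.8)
The plan is to exhibit $Aut(L)$ as the total group of a split short exact sequence
$$1 \longrightarrow Aut(L_1)^t \longrightarrow Aut(L) \longrightarrow S_t \longrightarrow 1$$
and then identify the resulting semidirect product with the wreath product. First I would define a homomorphism $\Phi \colon Aut(L) \to S_t$ using the hypothesis: given $\varphi \in Aut(L)$, the assumption that $\varphi(L_i) \in \{L_1,\dots,L_t\}$ guarantees that $\varphi(L_i)=L_{\sigma(i)}$ for a well-defined permutation $\sigma$ of $\{1,\dots,t\}$, and I set $\Phi(\varphi)=\sigma$. Since the $L_i$ are the disjoint pieces of $L$ (sharing neither vertices nor edges), composition of automorphisms induces composition of the associated permutations, so $\Phi$ is a group homomorphism. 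An element of $\ker\Phi$ fixes every $L_i$ setwise; because there are no edges between distinct pieces, its restriction to each $L_i$ is an automorphism of $L_i$, and conversely any $t$-tuple of such restrictions glues to a single automorphism of $L$. Hence $\ker\Phi \cong \prod_{i=1}^t Aut(L_i)$, which is isomorphic to $Aut(L_1)^t$ via the isomorphisms $L_i \cong L_1$.

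Next I would construct an explicit splitting of $\Phi$. Fix isomorphisms $f_i \colon L_1 \to L_i$ with $f_1=\mathrm{id}$, and for each $\sigma \in S_t$ define $\pi_\sigma \in Aut(L)$ by letting it act on $L_i$ as the isomorphism $f_{\sigma(i)} f_i^{-1} \colon L_i \to L_{\sigma(i)}$. A direct check on components gives $\pi_\sigma \pi_\tau = \pi_{\sigma\tau}$, so $\sigma \mapsto \pi_\sigma$ is a homomorphism $S_t \to Aut(L)$ satisfying $\Phi(\pi_\sigma)=\sigma$. This simultaneously shows that $\Phi$ is surjective and that it splits, whence $Aut(L) \cong Aut(L_1)^t \rtimes S_t$ as an internal semidirect product.

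It then remains to verify that the action of $S_t$ on $Aut(L_1)^t$ induced by conjugation by the $\pi_\sigma$ is exactly the coordinate-permutation action, and this is the step I expect to be the main obstacle, since it forces one to keep careful track of the chosen identifications $f_i$. I would identify $\psi \in \ker\Phi$ with the tuple $(\theta_1,\dots,\theta_t)\in Aut(L_1)^t$, where $\theta_i = f_i^{-1}(\psi|_{L_i}) f_i$, and then compute $\pi_\sigma \psi \pi_\sigma^{-1}$ piece by piece. On $L_{\sigma(i)}$ it acts as $f_{\sigma(i)} f_i^{-1}(\psi|_{L_i}) f_i f_{\sigma(i)}^{-1}$, whose image under the identification $f_{\sigma(i)}^{-1}(\cdot) f_{\sigma(i)}$ collapses to $f_i^{-1}(\psi|_{L_i}) f_i = \theta_i$. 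Thus conjugation by $\pi_\sigma$ moves $\theta_i$ into coordinate $\sigma(i)$, sending $(\theta_1,\dots,\theta_t)$ to $(\theta_{\sigma^{-1}(1)},\dots,\theta_{\sigma^{-1}(t)})$, which is precisely the defining $S_t$-action of the wreath product. Combining this with the split extension above yields $Aut(L) \cong Aut(L_1) \wr S_t$, as claimed.
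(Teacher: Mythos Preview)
Your proof is correct and follows essentially the same strategy as the paper's: both define the natural homomorphism $Aut(L)\to S_t$, identify its kernel with $\prod_i Aut(L_i)$, and build a complement out of the maps $f_j\circ f_i^{-1}$ coming from fixed isomorphisms $f_i\colon L_1\to L_i$. The only difference is cosmetic: the paper describes that complement as the subgroup of automorphisms preserving a transported total order on the vertices and finishes with a cardinality count, whereas you write down the section $\sigma\mapsto\pi_\sigma$ directly and verify the conjugation action, which makes the identification with the wreath product more explicit than in the paper.
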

\begin{proof}
We have $Aut(L)$ act on $A$$=$$\{ V(L_1), \cdots, V(L_t)\}$ and then there exist group homomorphism $\psi: Aut(L)\longrightarrow S_A $  such that $ker(\psi)$$=$$ \{ \varphi|\varphi(L_i)$$=$$L_i$ for all $ i\}$.  Consequently
 \begin{center}
 {$ker(\psi)$$\cong$$ Aut(L_1)\times \cdots \times Aut(L_t)$.
 }
 \end{center}
  Since $L$ is finite, so there exists a  totally  partial order $\leq$  on $V(L_1)$. Assume that $L_1$$\cong$$_{f_i}L_i$ for $i\geq 2$ and $f_1$$=$$Id_{L_1}$. We consider  $f_i(u) \leq f_i(v)$ if $u\leq v$ for all $u,v $$\in$$ V(L_1)$. Thus we give a totally partial order on each $V(L_i)$.
Let $H$$=$$\{ \varphi \in Aut(L)| u\leq v$  if and only if  $\varphi(u) \leq \varphi(v)\}$. We see that if $\varphi $$\in$$ H$ and $\varphi(L_i)$$=$$L_i$ then $\varphi$ is identity on $L_i$. Let $x$ be the minimum element of $V(L_1)$ and
  $B$$=$$\{ f_i(x)| 1\leq i\leq t\}$. Thus each element of $H$ is induced an bijection function  on $B$. Also for any bijection function $\sigma$ on $B$, the function $\varphi$ by definition $\varphi(u)$$=$$f_j((f_i)^{-1}(u))$,  whence  $u $$\in$$ V(L_i)$ and $\sigma(f_i(x))$$=$$f_j(x)$ is an automorphism of $L$. But $\ker(\psi) \cap H$ is trivial,
  consequently  $|Aut(L)|\geq |H||ker(\psi)|$. On the other hand $|Aut(L)/ker(\psi)| \leq |S_t|$, which completes the proof.
\end{proof}

%%%%%%%%%%%%%%%%%%%%%%%%%%%%%%%%%%%%%%%%%%%%%%%%%%%%%%%%%%%%%%%%%%$$

%%%%%%%%%%%%%%%%%%%%%%%%%%%%%%%%%%%%%%%%%%%%%%%%%%%%%%%%%%%%%%%%%%%%%%%%%%%%%%
\section{the automorphism group of power graph of finite groups}
In this section we  describe the automorphism group of finite groups, directed product of some groups and nilpotent groups.\\
By using Theorem \ref{t1} we have the following which is same to main result of M. Feng, X. Ma and K. Wang in 2016.
\begin{theorem}\label{autgroup}
For a finite group  $G$,
\begin{center}
{$Aut(\mathcal{P}(G))$$\cong$$ Aut(\overline{\mathcal{P}(G)}) \ltimes \prod_{\overline{x} \in V(\overline{\mathcal{P}(G)})}S_{|\overline{x}|}$.}
\end{center}
\end{theorem}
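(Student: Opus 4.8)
The plan is to recognize that this statement is an immediate specialization of Theorem \ref{t1} to the particular graph $L = \mathcal{P}(G)$. The sole hypothesis of Theorem \ref{t1} is that the underlying graph have finite vertex set, so the whole argument reduces to verifying that $\mathcal{P}(G)$ is a finite graph and then reading off the conclusion with $\overline{L} = \overline{\mathcal{P}(G)}$.

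First I would observe that, by the definition of the power graph, $V(\mathcal{P}(G))$ is the set of nontrivial elements of $G$; since $G$ is a finite group, this set has at most $|G|-1$ elements, so $|V(\mathcal{P}(G))| < \infty$. Thus the hypothesis of Theorem \ref{t1} is met. Applying that theorem with $L = \mathcal{P}(G)$ yields $Aut(\mathcal{P}(G)) \cong Aut(\overline{\mathcal{P}(G)}) \ltimes \prod_{\overline{x} \in V(\overline{\mathcal{P}(G)})} S_{|\overline{x}|}$, where each $\overline{x}$ is the $MEN$-subset containing $x$ and each factor $S_{|\overline{x}|}$ is the symmetric group on the vertices sharing the closed neighborhood of $x$. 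This is verbatim the asserted isomorphism, so no further computation is required.

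Because the formal derivation is a one-line appeal to Theorem \ref{t1}, there is essentially no obstacle at the level of \emph{this} statement; the genuine difficulty lies downstream. To make the formula useful one must describe the weighted quotient graph $\overline{\mathcal{P}(G)}$ explicitly — that is, identify the $MEN$-subsets $\overline{x}$ of $\mathcal{P}(G)$ (the classes of elements with equal closed neighborhood, which for power graphs are governed by the cyclic subgroup generated by each element) and determine $Aut(\overline{\mathcal{P}(G)})$ as a concrete group. That analysis, rather than the present corollary, is what the subsequent treatment of abelian, homocyclic, and nilpotent groups carries out; here it suffices to note that the semidirect factor $\prod_{\overline{x}} S_{|\overline{x}|}$ already captures all permutations internal to each equal-neighborhood class.
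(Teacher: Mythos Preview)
Your proposal is correct and matches the paper's own treatment: the paper also presents Theorem~\ref{autgroup} as an immediate consequence of Theorem~\ref{t1} applied to $L=\mathcal{P}(G)$, with no further argument given. Your additional remarks about where the real work lies are accurate but go beyond what the paper includes at this point.
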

%%%%%%%%%%%%%%%%%%%%%%%%%%%%%%%%%%%%%%%%%%%%%%%%%%%%%%%%%%%%
For the cyclic subgroup $\langle a\rangle$ of  the group $G$, the
subset $\{a^i|(i,o(a))$$=$$1\}$ of  $G$,
is denoted by $gen(\langle a\rangle)$.
%%%%%%%%%%%%%%%%%%%%%%%%%%%%%%%%%%%%%%%%%%%%%%%%%%%%%%%%%%%%%%%%%%%%%%%

%%%%%%%%%%%%%%%%%%%%%%%%%%%%%%%%%%%%%%%%%%%%%%%%%%%%%%%%%%%%%%%%%%%%%%%%%%%%%%%%%%%%%%%%
We will use the following.
\begin{lemma} [\cite{j1}, {Proposition 2.8}] \label{l2}
Let $G$ be a finite group and $a $$\in$$ G$. If $|C_G(a)|$ is not prime power then
$gen(\langle a\rangle)$ is an MEN-subset.

\end{lemma}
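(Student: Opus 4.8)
The plan is to check the two conditions in the definition of an MEN-subset in turn: that the elements of $gen(\langle a\rangle)$ all have one and the same closed neighborhood in $\mathcal{P}(G)$, and that this set is maximal with that property. The first is soft and uses no hypothesis. I would record the adjacency rule: for a nontrivial $b$, $b\in N[g]$ iff $b\in\langle g\rangle$ or $g\in\langle b\rangle$. If $g$ is any generator of $\langle a\rangle$ then $\langle g\rangle=\langle a\rangle$, so $b\in\langle g\rangle\iff b\in\langle a\rangle$ and $g\in\langle b\rangle\iff\langle a\rangle\subseteq\langle b\rangle$; neither condition depends on the choice of $g$. Hence $N[g]=N[a]$ for every $g\in gen(\langle a\rangle)$, so $gen(\langle a\rangle)$ is a set of vertices with equal closed neighborhood.

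For maximality, suppose toward a contradiction that some vertex $c\notin gen(\langle a\rangle)$ satisfies $N[c]=N[a]$. Since $a\in N[a]=N[c]$ and $c\neq a$, the vertex $c$ is adjacent to $a$, so $c\in\langle a\rangle$ or $a\in\langle c\rangle$; as $c$ is not a generator of $\langle a\rangle$, exactly one of the strict inclusions $\langle c\rangle\subsetneq\langle a\rangle$ or $\langle a\rangle\subsetneq\langle c\rangle$ holds. In each case the strategy is to produce one vertex lying in precisely one of $N[a]$ and $N[c]$.

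The easy situations are settled inside a cyclic group, where two elements are adjacent exactly when their orders are comparable under divisibility. Since the divisor poset of an integer is a chain if and only if the integer is a prime power, a short argument gives: when $\langle c\rangle\subsetneq\langle a\rangle$ and $o(a)$ is not a prime power there is a power of $a$ whose order is incomparable to $o(c)$, hence adjacent to $a$ but not to $c$; and when $\langle a\rangle\subsetneq\langle c\rangle$ there is, unless both $o(a)$ and $o(c)$ are powers of one common prime, a power of $c$ whose order is incomparable to $o(a)$, hence adjacent to $c$ but not to $a$. Either witness contradicts $N[a]=N[c]$.

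This leaves the genuinely $p$-local residue, where $\langle a\rangle$ is a cyclic $p$-group of order $p^{\alpha}$ (and, in the second case, $\langle c\rangle$ a cyclic $p$-group as well); here no witness of $p$-power order can separate the neighborhoods, and this is exactly where the hypothesis that $|C_G(a)|$ is not a prime power is indispensable. By Cauchy's theorem applied in $C_G(a)$ (which has a prime divisor different from $p$) it furnishes an element $v\in C_G(a)$ of prime order $q\neq p$, and $v$ commutes with all of $\langle a\rangle$. In the case $\langle c\rangle\subsetneq\langle a\rangle$ I would test $cv$: it has order $p^{\beta}q$ (where $o(c)=p^{\beta}$), its unique subgroup of order $p^{\beta}$ is $\langle c\rangle$ so $c\in\langle cv\rangle$ and $cv\in N[c]$, yet its $p$-part is only $\langle c\rangle$, too small to contain $a$, so $cv\notin N[a]$. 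In the case $\langle a\rangle\subsetneq\langle c\rangle$ I would symmetrically test $av$, of order $p^{\alpha}q$ with $a\in\langle av\rangle$ but whose $p$-part $\langle a\rangle$ is too small to contain $c$, so $av\in N[a]\setminus N[c]$. In every case $N[a]\neq N[c]$, contradicting the choice of $c$; hence $gen(\langle a\rangle)$ admits no proper enlargement and is an MEN-subset. I expect the coprime-order construction to be the crux: the whole argument hinges on multiplying by the centralizing element $v$ so that $o(xv)=o(x)\,q$ and the $p$-part of $\langle xv\rangle$ stays exactly $\langle x\rangle$, which is precisely what lets the second prime dividing $|C_G(a)|$ pry the two closed neighborhoods apart.
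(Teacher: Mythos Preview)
The paper does not actually prove Lemma~\ref{l2}; it merely quotes the result from \cite{j1} (Proposition~2.8 there) and uses it as a black box. Consequently there is no in-paper argument to compare your proposal against.

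That said, your proof is correct and self-contained. The first step---that all generators of $\langle a\rangle$ share the same closed neighborhood---is immediate from $\langle g\rangle=\langle a\rangle$. For maximality you correctly reduce to producing, for any $c$ with $\langle c\rangle\subsetneq\langle a\rangle$ or $\langle a\rangle\subsetneq\langle c\rangle$, a witness separating $N[a]$ from $N[c]$. When the larger of the two cyclic groups has composite-order generator, the divisor lattice of its order is not a chain, so an element of order incomparable to the smaller one exists inside that cyclic group; this needs no hypothesis on $C_G(a)$. The genuinely delicate case is when both $\langle a\rangle$ and $\langle c\rangle$ are $p$-groups, and here your use of Cauchy's theorem in $C_G(a)$ to extract $v$ of prime order $q\neq p$, and then testing $cv$ (respectively $av$), is exactly the right idea: because $v$ centralizes $a$ (hence all of $\langle a\rangle$), the element $xv$ has order $o(x)q$ and its $p$-part is precisely $\langle x\rangle$, which pins down which of $a,c$ it can be adjacent to.

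One small remark: depending on whether the identity is taken as a vertex of $\mathcal{P}(G)$ (the abstract excludes it, the introduction does not), you might formally need to dismiss $c=1$ separately; but since $N[1]$ is then all of $G$ while $N[a]$ is not (the hypothesis forces $G$ to contain elements of two coprime prime orders, which are non-adjacent), this is immediate.
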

\begin{lemma} [\cite{j1},{Proposition 2.9}] \label{l3}
 Let $\langle a\rangle$ be a maximal cyclic subgroup of finite group $G$. If
$ C_G(a)\neq \langle a\rangle$ then $gen(\langle b \rangle)$ is an MEN-subset
for any $b $$\in$$ \langle a\rangle$.

\end{lemma}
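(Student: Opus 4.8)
The plan is to prove the two halves of the MEN condition separately: first that all generators of $\langle b\rangle$ share a closed neighborhood, and then that no other vertex does. For the first half, I observe that for a vertex $x$ with $\langle x\rangle=\langle b\rangle$ one has $z\in N[x]$ exactly when $z\in\langle x\rangle$ or $\langle x\rangle\subseteq\langle z\rangle$; since both conditions depend only on the subgroup $\langle x\rangle=\langle b\rangle$, every $x\in gen(\langle b\rangle)$ has the same closed neighborhood, so $gen(\langle b\rangle)$ is contained in a single equal-closed-neighborhood class. It then remains only to prove maximality: if $N[w]=N[b]$ then $\langle w\rangle=\langle b\rangle$.

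Before attacking maximality I would record the consequences of the hypothesis. Fix $c\in C_G(a)\setminus\langle a\rangle$. As $c$ commutes with $a$ it commutes with every power of $a$, so $c\in C_G(b)$ for all $b\in\langle a\rangle$ and in fact $C_G(a)\subseteq C_G(b)$; moreover $\langle a,c\rangle$ is abelian and non-cyclic, since otherwise it would be a cyclic group properly containing the maximal cyclic group $\langle a\rangle$. Now fix $b\in\langle a\rangle$. If $|C_G(b)|$ is not a prime power, then Lemma \ref{l2} applied to $b$ gives at once that $gen(\langle b\rangle)$ is an MEN-subset, and we are done. So I would reduce to the case $|C_G(b)|=p^k$. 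Then $a,b,c$ all lie in the $p$-group $C_G(b)$, so $o(a)=p^s$; replacing $c$ by a suitable power I may assume $o(c)=p$ and $c\notin\langle a\rangle$, whence $\langle a,c\rangle=\langle a\rangle\times\langle c\rangle\cong\mathbb{Z}_{p^s}\times\mathbb{Z}_p$.

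For maximality, suppose $N[w]=N[b]$ with $\langle w\rangle\neq\langle b\rangle$. Since $b\in N[b]=N[w]$, the vertices $b$ and $w$ are adjacent, hence comparable, so either $\langle w\rangle\subsetneq\langle b\rangle$ or $\langle b\rangle\subsetneq\langle w\rangle$, and in the second case I split further on whether $\langle w\rangle\subseteq\langle a\rangle$. In each case the aim is to exhibit a vertex $z$ adjacent to exactly one of $b,w$, contradicting $N[b]=N[w]$. If $\langle b\rangle\subsetneq\langle w\rangle$ but $\langle w\rangle\not\subseteq\langle a\rangle$, I take $z=a$: then $a\in N[b]$ because $b\in\langle a\rangle$, while $a\notin N[w]$, since $a\in\langle w\rangle$ would force $\langle a\rangle=\langle w\rangle$ by maximality and $w\in\langle a\rangle$ is excluded. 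In the remaining cases everything sits inside $\langle a\rangle$, and here I would use the extra direction supplied by $c$: writing $b=(p^{s-t},0)$ in $\mathbb{Z}_{p^s}\times\mathbb{Z}_p$, the tilted element $z=(p^{s-t-1},1)$ satisfies $pz=b$, so $b\in\langle z\rangle$ and $z\in N[b]$, yet $z\notin\langle a\rangle$ is incomparable to the purely first-coordinate element $w$, so $z\notin N[w]$; the symmetric construction based at $w$ handles $\langle w\rangle\subsetneq\langle b\rangle$.

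I expect the tilted-witness construction to be the crux: it is exactly the point where the hypothesis $C_G(a)\neq\langle a\rangle$ is used, since the second coordinate—the non-cyclic direction of $\langle a,c\rangle$—is what renders $z$ incomparable to elements of $\langle a\rangle$ of the wrong order. Verifying that $z$ lies in precisely one of the two closed neighborhoods in every subcase is then a routine order computation in $\mathbb{Z}_{p^s}\times\mathbb{Z}_p$, which I would carry out to conclude that $\langle w\rangle=\langle b\rangle$ and hence that $gen(\langle b\rangle)$ is an MEN-subset.
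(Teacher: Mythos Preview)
The paper does not actually prove this lemma: it is quoted verbatim from \cite{j1} (Proposition~2.9) and no argument is given here, so there is no in-paper proof to compare against.

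Your proposal is essentially correct and self-contained. The reduction via Lemma~\ref{l2} to the case where $C_G(b)$ is a $p$-group is clean, and the tilted witness $z=(p^{\,s-t-1},1)$ in $\langle a\rangle\times\langle c\rangle\cong\mathbb{Z}_{p^s}\times\mathbb{Z}_p$ does exactly what you claim: $\langle z\rangle\cap\langle a\rangle=\langle b\rangle$, so $z$ distinguishes $b$ from any $w\in\langle a\rangle$ with $\langle w\rangle\neq\langle b\rangle$. One small gap to patch: the phrase ``replacing $c$ by a suitable power I may assume $o(c)=p$ and $c\notin\langle a\rangle$'' is not literally valid, since a power of $c$ of order $p$ may well land inside $\langle a\rangle$ (e.g.\ $c=(1,1)$ in $\mathbb{Z}_p\times\mathbb{Z}_{p^2}$ with $a=(0,1)$ has $c^p=(0,p)\in\langle a\rangle$). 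What you need instead is that the abelian non-cyclic $p$-group $\langle a,c\rangle$ has $p$-rank at least $2$, so its socle strictly contains the unique order-$p$ subgroup of $\langle a\rangle$; hence some element of order $p$ in $\langle a,c\rangle$ lies outside $\langle a\rangle$, and you may take that as your new $c$. With this adjustment the argument goes through.
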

\begin{lemma} [\cite{j1},{Theorem 2.10}]\label{hal3}
Let $G$ be a finite group. Then $K\subseteq G$ is an
$MEN$-subset if and only if $K$ satisfies in one of the following
conditions:

(1) $K=\langle a\rangle - \langle a^{p^t} \rangle$ where
$o(a)=p^n$, $1<t \leq n$, $N[a^{p^{t-1}}]= \langle a\rangle$ and
$N[a^{p^t}]\neq \langle a\rangle $.

 (2) $K=gen(\langle a\rangle)$ for some $a \in G$.
 \end{lemma}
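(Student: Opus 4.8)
The plan is to recognize that an $MEN$-subset is nothing but a maximal class of mutual \emph{closed twins}: writing $x\approx y$ when $N[x]=N[y]$, this is an equivalence relation whose classes are exactly the $MEN$-subsets, which I will call twin classes. First I would record the elementary but decisive remark that any two elements $x,y$ of one twin class are adjacent (since $x\in N[x]=N[y]$), so each twin class is a clique of $\mathcal{P}(G)$; in the power graph, adjacency of $x$ and $y$ means $\langle x\rangle\subseteq\langle y\rangle$ or $\langle y\rangle\subseteq\langle x\rangle$. Hence the cyclic subgroups $\{\langle x\rangle : x\in K\}$ attached to a twin class $K$ are pairwise comparable, i.e. they form a chain; as $G$ is finite this chain has a top element $\langle a\rangle$ with $a\in K$, and therefore $K\subseteq\langle a\rangle$ with $a$ a genuine member of $K$. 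This reduces everything to understanding, inside one cyclic group $\langle a\rangle$, which powers of $a$ are twins of $a$, while keeping track of the neighbours of those powers that lie \emph{outside} $\langle a\rangle$.

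Next I would split into two cases according to whether $o(a)$ is a prime power, guided by Lemmas~\ref{l2} and~\ref{l3}. Suppose first that $o(a)=m$ is \emph{not} a prime power. The key point is that inside $\langle a\rangle\cong\mathbb{Z}_m$ a non-generator $x=a^k$ (so $\langle x\rangle\subsetneq\langle a\rangle$) always admits an element $a^j\in\langle a\rangle$ with $\langle a^j\rangle$ and $\langle x\rangle$ incomparable: since $m$ has at least two prime divisors one can choose $a^j$ whose order is divisible by a prime missing from $o(x)$ and which does not divide $o(x)$. Such an $a^j$ is adjacent to the generator $a$ but not to $x$, so $N[a]\neq N[x]$ and $x$ cannot be a twin of $a$. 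Consequently the twin class of $a$ meets $\langle a\rangle$ exactly in $gen(\langle a\rangle)$; that these generators are genuine mutual twins is immediate, and their maximality (that no element outside $\langle a\rangle$ joins them) is precisely what Lemmas~\ref{l2} and~\ref{l3} supply under the relevant centraliser hypotheses. This yields conclusion~(2).

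Finally I would treat the case $o(a)=p^n$. Here the subgroups of $\langle a\rangle$ form a single chain, so $\mathcal{P}(\langle a\rangle)$ is complete and all distinctions between powers of $a$ are created by neighbours lying outside $\langle a\rangle$. The decisive monotonicity is that if some external $w\notin\langle a\rangle$ satisfies $a^{p^s}\in\langle w\rangle$, then automatically $a^{p^{s'}}\in\langle w\rangle$ for every $s'\ge s$, since $\langle a^{p^{s'}}\rangle\subseteq\langle a^{p^s}\rangle\subseteq\langle w\rangle$; hence the property ``$a^{p^i}$ has a neighbour outside $\langle a\rangle$'' is inherited downward along the chain. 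Letting $t$ be the first level at which this happens, the elements of order exceeding $p^{\,n-t}$, i.e. $K=\langle a\rangle-\langle a^{p^t}\rangle$, all have closed neighbourhood exactly $\langle a\rangle$, whereas every element of $\langle a^{p^t}\rangle$ acquires the external neighbour and so has a strictly larger closed neighbourhood; this is conclusion~(1), with the boundary conditions $N[a^{p^{t-1}}]=\langle a\rangle$ and $N[a^{p^t}]\neq\langle a\rangle$ recording exactly where the threshold sits (the sub-case $t=1$ degenerates to $gen(\langle a\rangle)$ and is absorbed into~(2)). For the converse it remains to verify directly that a set of the form~(1), with its stated boundary conditions, really is a twin class: one checks that each $x\in K$ has $N[x]=\langle a\rangle$ using $N[a^{p^{t-1}}]=\langle a\rangle$ together with the downward monotonicity, and that no further element can be adjoined using $N[a^{p^t}]\neq\langle a\rangle$.

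I expect the main obstacle to be the bookkeeping at the boundary between the two regimes, in particular proving cleanly that once an external neighbour appears it cannot disappear at lower levels, and isolating the single threshold $t$ so that the two stated neighbourhood conditions hold simultaneously. The incomparable-element construction in the non-prime-power case, and the correct treatment of the degenerate cyclic $p$-group where the whole vertex set collapses to one interval, are the places where one must be most careful not to conflate the \emph{shape} of a twin class with the separate question of its \emph{maximality}, which is exactly what the centraliser hypotheses of Lemmas~\ref{l2} and~\ref{l3} are there to control.
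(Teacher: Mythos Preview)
The paper does not prove this lemma at all: it is quoted verbatim as Theorem~2.10 of \cite{j1} and immediately followed by Corollary~\ref{ha1}, with no proof environment in between. There is therefore nothing in the present paper to compare your argument against.

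That said, your outline for the ``only if'' direction is sound and is essentially the natural one: the observation that a closed-twin class is a clique, hence (in the power graph) a chain of cyclic subgroups with a top element $\langle a\rangle$, is the right reduction, and your incomparable-divisor argument in the non-prime-power case and the monotonicity of external neighbours along the chain $\langle a\rangle\supsetneq\langle a^{p}\rangle\supsetneq\cdots$ in the prime-power case are exactly what is needed. Two points deserve tightening. First, your appeal to Lemmas~\ref{l2} and~\ref{l3} for ``maximality'' is misplaced: those lemmas carry centraliser hypotheses that are not assumed here, and in fact you do not need them---once you know $K\subseteq\langle a\rangle$ and that every generator of $\langle a\rangle$ has the same closed neighbourhood as $a$, maximality of $K$ forces $gen(\langle a\rangle)\subseteq K$ directly. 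Second, for the converse you should be explicit that a set of the form~(2) need \emph{not} be an $MEN$-subset on its own (e.g.\ in a cyclic $p$-group of order $p^{2}$ the generators are not a full twin class); the statement is really a description of the possible \emph{shapes}, with~(1) covering precisely the cases where $gen(\langle a\rangle)$ is too small, so the two alternatives must be read together rather than as independent sufficient conditions.
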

 Let $x_M$ is an element of maximum order in $\overline{x}$. By Lemmas \ref{l2}, \ref{l3} and \ref{hal3}, we have the following.

\begin{corollary}\label{ha1}
    Let $G$ be a finite group and $x\in G$. Then $o(x_M)=1+\sum_{\overline{a} \in N(\overline{x_M}), |\overline{a}| \leq |\overline{x_M}| } |\overline{a}|$.
   \end{corollary}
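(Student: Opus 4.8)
The plan is to reduce the identity to a counting statement inside the cyclic group $\langle x_M\rangle$. Since $o(x_M)=|\langle x_M\rangle|$, the quantity $o(x_M)-1$ is exactly the number of non-identity elements of $\langle x_M\rangle$, and the ``$1+$'' on the right accounts for the identity. Thus it suffices to show that the non-identity elements of $\langle x_M\rangle$ are partitioned by precisely the MEN-subsets appearing in the sum, namely $\overline{x_M}$ together with those $\overline a\in N(\overline{x_M})$ satisfying $|\overline a|\le|\overline{x_M}|$.

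First I would prove that every MEN-subset meeting $\langle x_M\rangle$ is in fact \emph{contained} in $\langle x_M\rangle$. The starting observation is that if $y,z\in\langle x_M\rangle$ have equal order then $\langle y\rangle=\langle z\rangle$, whence $N[y]=N[z]$ in $\mathcal{P}(G)$; so each ``order layer'' of the cyclic group $\langle x_M\rangle$ lies in a single MEN-subset. To see that no such set spills outside $\langle x_M\rangle$, I would take $y\in\langle x_M\rangle\setminus\overline{x_M}$ and an arbitrary $z\in\overline a=\overline y$, and use $x_M\in N[y]=N[z]$ to conclude $z-x_M$; hence either $z\in\langle x_M\rangle$ (done) or $x_M\in\langle z\rangle$. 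In the latter case I would invoke the classification of Lemma \ref{hal3}: writing $\overline a$ in shape (1) or (2), a short argument shows that $x_M\in\langle z\rangle$ forces $x_M\in\overline a$, contradicting $x_M\notin\overline a$. This pins down $\overline a\subseteq\langle x_M\rangle$.

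Next I would identify these interior MEN-subsets with the summands. Every $\overline a\subseteq\langle x_M\rangle$ with $\overline a\neq\overline{x_M}$ consists of proper powers of $x_M$, so each of its elements is adjacent to $x_M$ and therefore $\overline a\in N(\overline{x_M})$; the inequality $|\overline a|\le|\overline{x_M}|$ follows from the divisibility $o(a)\mid o(x_M)$ together with the explicit cardinalities of the two admissible shapes of an MEN-subset in Lemma \ref{hal3} (for the generator type this uses $\varphi(o(a))\mid\varphi(o(x_M))$). Conversely, a neighbour $\overline a$ with $|\overline a|\le|\overline{x_M}|$ cannot arise from a cyclic subgroup strictly containing $\langle x_M\rangle$, so it must lie inside $\langle x_M\rangle$. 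Summing the weights of $\overline{x_M}$ and of all these neighbours then recovers $|\langle x_M\rangle|-1$, and adding back the identity yields the stated formula.

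I expect the delicate step to be the second half of the matching: showing that the purely cardinal condition $|\overline a|\le|\overline{x_M}|$ really does separate the MEN-subsets lying \emph{below} $x_M$ from those coming from larger cyclic subgroups containing $x_M$. Because Euler's totient is not monotone, coincidences such as $\varphi(d)=\varphi(2d)$ for odd $d$ produce borderline ties (already visible in $\mathbb{Z}_6$, where the generator class and the order-$3$ class both have size two), and disposing of them cleanly is where Lemma \ref{hal3} and the precise interplay between $o(x_M)$, the generator-type and interval-type MEN-subsets must be used; this is the heart of the argument, while the remaining bookkeeping is routine.
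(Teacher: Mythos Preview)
Your overall strategy---partitioning $\langle x_M\rangle\setminus\{1\}$ into MEN-subsets and matching them against the summands---is the natural one, and the paper itself offers nothing beyond the sentence ``By Lemmas \ref{l2}, \ref{l3} and \ref{hal3}, we have the following.'' So there is no paper proof to compare with in any substantive sense.

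However, the worry you raise in your final paragraph is not merely a delicate step; it is fatal. The formula, as written, is false, and $\mathbb{Z}_6$ already breaks it. Take $G=\mathbb{Z}_6$ and let $x$ be the element of order $2$. Then $\overline{x}=\{x\}$ (its closed neighbourhood is $\{x\}$ together with the two generators of $G$, shared by no other element), so $x_M=x$, $o(x_M)=2$, and $|\overline{x_M}|=1$. The only neighbour of $\overline{x_M}$ in $\overline{\mathcal P(G)}$ is the generator class, of size $\varphi(6)=2>1$, so the sum is empty and the right-hand side equals $1$, not $2$. The same phenomenon recurs for the element of order $3$: there $|\overline{x_M}|=\varphi(3)=2$, the generator class (also of size $2$) passes the filter $|\overline a|\le|\overline{x_M}|$ and gets counted even though it lies \emph{outside} $\langle x_M\rangle$, while $\overline{x_M}$ itself is excluded from the open neighbourhood $N(\overline{x_M})$; the two errors happen to cancel here, but not in general.

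Two separate defects are visible. First, since the sum runs over the open neighbourhood, the term $|\overline{x_M}|$ is never present on the right, so the partition you describe cannot literally be what the formula is counting. Second---and this is the point you flagged---the purely cardinal filter $|\overline a|\le|\overline{x_M}|$ does \emph{not} separate MEN-subsets inside $\langle x_M\rangle$ from those coming from larger cyclic overgroups, precisely because $\varphi$ is not strictly monotone. No appeal to Lemma~\ref{hal3} can repair this: the classification there tells you what the MEN-subsets look like, but it cannot make $\varphi(6)>\varphi(3)$. The corollary as stated needs either a different side condition (e.g.\ one phrased in terms of $o(a_M)$ rather than $|\overline a|$) or an additional hypothesis ruling out such totient collisions; it cannot be proved in the generality claimed.
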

   \begin{theorem}
   Let $G=H\times K$ and $(|H|, |K|)=1$ then
   \begin{center}{$Aut(\overline{\mathcal{P}(G)})=Aut(\overline{\mathcal{P}(H)}) \times Aut(\overline{\mathcal{P}(K)})$.
   }
   \end{center}

   \end{theorem}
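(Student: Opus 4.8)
The plan is to identify $\overline{\mathcal{P}(G)}$ with the comparability graph of a product poset and then read off its automorphism group one factor at a time. I may assume both $H$ and $K$ are nontrivial, since otherwise one factor of the right-hand side is the automorphism group of the empty graph and the statement is immediate. Because $(|H|,|K|)=1$, every cyclic subgroup of $G$ is uniquely a product $C\times D$ with $C\le H$, $D\le K$ cyclic, and $gen(\langle(h,k)\rangle)=gen(\langle h\rangle)\times gen(\langle k\rangle)$; I write $v_{C,D}:=gen(C\times D)$ and let $\mathcal{C}(H)$ denote the cyclic subgroups of $H$ ordered by inclusion.

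First I would pin down the vertex set via a preliminary claim: when both factors are nontrivial, \emph{every} MEN-subset of $G$ is of type (2) in Lemma \ref{hal3}, i.e. equals some $v_{C,D}$ with $(C,D)\ne(1,1)$. Indeed a type-(1) set needs an element of prime-power order, which by coprimality lies in $H\times\{e\}$ or $\{e\}\times K$; and for $x\in v_{C,1}$ the mixed neighbours of $x$ are exactly the $(h',k')$ with $k'\ne e$ and $C\le\langle h'\rangle$, so equality of closed neighbourhoods for $v_{C,1}$ and $v_{C',1}$ forces $C=C'$ (using only that $K$ has a nonidentity element). Thus no two distinct generator-sets merge, no type-(1) set occurs, and $V(\overline{\mathcal{P}(G)})$ is in bijection with the nontrivial cyclic subgroups $C\times D$, with adjacency given by comparability in $(\mathcal{C}(H)\times\mathcal{C}(K))\setminus\{(1,1)\}$ and $weight(v_{C,D})=|gen(C\times D)|=|gen(C)|\,|gen(D)|$.

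Next I would build the comparison map. By Corollary \ref{ha1} the order $o(x_M)$ of every vertex is determined by the weighted graph, so each $\varphi\in Aut(\overline{\mathcal{P}(G)})$ preserves it; as $o(v_{C,D})=|C|\,|D|$ divides $|H|$ precisely when $D=1$ and divides $|K|$ precisely when $C=1$, coprimality makes these mutually exclusive integer conditions and $\varphi$ preserves the partition $V_H\sqcup V_K\sqcup V_M$ into pure-$H$, pure-$K$, and mixed vertices. The induced weighted subgraph on $V_H$ is the comparability graph $Q(H)$ of $\mathcal{C}(H)\setminus\{1\}$ with weights $|gen(C)|$, and $\varphi|_{V_H}$ preserves comparability and (by Corollary \ref{ha1}, now applied to recover $|C|$) the order $|C|$; since $C\subsetneq C'$ forces $|C|<|C'|$, preservation of orders and comparability forces preservation of the partial order. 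An order-preserving automorphism of $\mathcal{C}(H)$ respects the relation ``$v_{C},v_{C'}$ have equal closed neighbourhood'' (a poset invariant), hence descends to $\overline{\mathcal{P}(H)}$; the same applies to $K$, yielding $\Psi=(\Psi_H,\Psi_K)\colon Aut(\overline{\mathcal{P}(G)})\to Aut(\overline{\mathcal{P}(H)})\times Aut(\overline{\mathcal{P}(K)})$. For injectivity, if $\Psi(\varphi)=\mathrm{id}$ then $\varphi$ fixes every pure vertex (inside each collapsed type-(1) chain the generator-sets have strictly increasing weights, so they are fixed one by one), and a mixed $v_{C,D}$ is pinned down by its pure neighbours $\{v_{C',1}:C'\le C\}$ and $\{v_{1,D'}:D'\le D\}$, whence $\varphi=\mathrm{id}$. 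For surjectivity, given $(\sigma,\tau)$ I lift it to order-preserving poset automorphisms of $\mathcal{C}(H),\mathcal{C}(K)$ and set $\varphi(v_{C,D}):=v_{\sigma C,\tau D}$, which preserves comparability in the product poset and multiplies the two weight factors separately.

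The step I expect to be the main obstacle is the middle one: showing that the passage between $\overline{\mathcal{P}(H)}$ and the \emph{finer} graph $Q(H)$ sitting inside $\overline{\mathcal{P}(G)}$ neither creates nor destroys symmetry, i.e. that restriction induces an isomorphism $Aut(Q(H))\cong Aut(\overline{\mathcal{P}(H)})$ and that each $\sigma\in Aut(\overline{\mathcal{P}(H)})$ lifts to an \emph{order}-preserving automorphism of $\mathcal{C}(H)$. The delicate point is that a priori an automorphism of a comparability graph need only preserve comparability, not the direction of the order; here I would use Corollary \ref{ha1} to show orders are preserved, together with the strict monotonicity $|C|<|C'|$ along inclusions and the distinctness of weights inside each type-(1) prime-power chain, to force both the order-preservation and the rigidity that makes the lift unique. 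Once this is in place, the product-poset construction in the surjectivity step is legitimate and $\Psi$ is the required isomorphism.
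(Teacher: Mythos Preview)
Your proposal is correct and follows essentially the same route as the paper: both observe that $|C_G(a,b)|=|C_H(a)|\,|C_K(b)|$ is never a prime power so every MEN-subset of $G$ is a generator set, invoke Corollary~\ref{ha1} to see that element orders are preserved and hence the $H$- and $K$-parts are invariant, and then reconcile the finer graph of generator sets sitting inside $\overline{\mathcal{P}(G)}$ (your $Q(H)$, the paper's $\widetilde{\mathcal{P}(H)}$) with the possibly coarser $\overline{\mathcal{P}(H)}$. Your product-poset framing and explicit treatment of the lift along type-(1) chains via strictly increasing weights makes that last reconciliation more transparent than the paper's terse ``we can consider these points as one point.''
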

   \begin{proof}
       Let $H,K$ are nontrivial groups, $(a,b) $$\in$$ G $ and $\overline{\varphi} $$\in$$ Aut(\overline{\mathcal{P}(G)})$. Then $C_G(a,b)$$=$$C_H(a) \times C_K(b)$ and, $|C_G(a,b)|$ is not  a prime power.
       Thus by Lemma \ref{l2} and  Corollary \ref{ha1}, $\varphi$ is preserving the order of each element of $G$.
       Therefore $\varphi(H)$$=$$H, \varphi(K)$$=$$K$. Assume that $\varphi(a)$$=$$a_1, \varphi(b)$$=$$b_1$, we have
       $o(\varphi(a,b))$$=$$o(a,b)$ and $a_1, b_1 $$\in$$ \langle \varphi(a,b) \rangle$.   But exactly subgroup of order $o(a)o(b)$ containing
       $a_1$ and $b_1$ is $\langle (a_1, b_1)\rangle $. We deduce
       that $\overline{\varphi(a,b)}$$=$$\overline{(a_1,b_1)}$.
       Therefore\\ $Aut(\overline{\mathcal{P}(G)})$$\cong$$ Aut(\widetilde{\mathcal{P}(H)}) \times Aut(\widetilde{\mathcal{P}(K)})$
       where $\widetilde{\mathcal{P}(H)}$ is the set of       $MEN$-set of $\mathcal{P}(G)$ contained
       in $H$. But each  element   of   $\overline{\mathcal{P}(H)}$ is same to an element of $ \widetilde{\mathcal{P}(H)}$,  or  union
       of some elements of $ \widetilde{\mathcal{P}(H)}$. By Lemma \ref{hal3}, we can assume
       that $\overline{x_H}=\overline{x_1}\cup \cdots \cup \overline{x_t}$ where $o(x_1)< \cdots <o(x_t)=o(x)$ and $N_H[x_1]= \cdots=N_H[x_t]=N_H[x]$. Since we can consider these points as one point in $\widetilde{\mathcal{P}(H)}$,  hence the
       desired result follows.

   \end{proof}
   A direct result  of  above theorem is for nilpotent groups as following.

          \begin{theorem}\label{nil}
          Assume that  $G$  be a nilpotent  finite group and,  $G=P_1 \times \cdots \times P_t$ where $P_i$ is sylow subgroup of $G$.
          Then
           \begin{center}{
           $Aut(\mathcal{P}(G))$$=$$(Aut(\overline{\mathcal{P}(P_1)})\times \cdots \times Aut(\overline{\mathcal{P}(P_t)}))\ltimes \prod_{B \in V(\overline{\mathcal{P}(G)})}S_{|B|}$.}
           \end{center}

          \end{theorem}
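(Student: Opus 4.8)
The plan is to obtain this statement as a direct consequence of the two theorems already established: the preceding theorem on coprime direct products and Theorem \ref{autgroup} (the power-graph instance of Theorem \ref{t1}). The starting observation is that since $G$ is nilpotent it is the internal direct product of its Sylow subgroups $P_1, \ldots, P_t$, and distinct factors $P_i$ are $p_i$-groups for distinct primes $p_i$, so they have pairwise coprime orders. This is exactly the hypothesis needed to invoke the coprime-product theorem.

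First I would prove that $Aut(\overline{\mathcal{P}(G)}) \cong Aut(\overline{\mathcal{P}(P_1)}) \times \cdots \times Aut(\overline{\mathcal{P}(P_t)})$ by induction on $t$. The base case $t=1$ is immediate. For the inductive step, write $G = P_1 \times K$ where $K = P_2 \times \cdots \times P_t$; since $(|P_1|, |K|) = 1$, the theorem immediately preceding this one gives $Aut(\overline{\mathcal{P}(G)}) \cong Aut(\overline{\mathcal{P}(P_1)}) \times Aut(\overline{\mathcal{P}(K)})$, and applying the inductive hypothesis to the nilpotent group $K$ (which has the $t-1$ Sylow factors $P_2, \ldots, P_t$) completes the decomposition.

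Second, I would apply Theorem \ref{autgroup} to $G$ itself, which yields $Aut(\mathcal{P}(G)) \cong Aut(\overline{\mathcal{P}(G)}) \ltimes \prod_{B \in V(\overline{\mathcal{P}(G)})} S_{|B|}$. Substituting the product decomposition of $Aut(\overline{\mathcal{P}(G)})$ from the first step into this semidirect-product expression produces precisely the claimed formula.

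I do not expect any genuine obstacle, as the result is essentially a corollary of the two earlier theorems. The only point demanding attention is confirming that the coprimality hypothesis of the preceding theorem is satisfied at every stage of the induction; this holds because at each step one splits off a single Sylow subgroup $P_1$, whose order is a power of one prime and hence coprime to the order of the product $K$ of the remaining Sylow subgroups. A minor bookkeeping point is that $V(\overline{\mathcal{P}(G)})$ is the vertex set of the reduced graph of the full group $G$, not of the individual factors, so the symmetric-group product on the right is indexed globally; this is exactly what Theorem \ref{autgroup} delivers and requires no further adjustment.
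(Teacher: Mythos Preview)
Your proposal is correct and matches the paper's own treatment: the paper states that Theorem~\ref{nil} is ``a direct result of the above theorem'' (the coprime-product theorem) and gives no separate proof, and you have simply written out the obvious induction on $t$ together with the invocation of Theorem~\ref{autgroup}. The only minor remark is that the coprime-product theorem's proof assumes both factors are nontrivial, but this is automatic here since each $P_i$ is a genuine Sylow subgroup.
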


          Now we find the automorphism group of power graph of
           cyclic group when $n$ is not prime power, which is same to \cite{feng}.
   \begin{corollary}
    Let $G$ be a  cyclic group of order $n$ where $n$ is not prime power.
     Then $Aut(\mathcal{P}(G))$$\cong$$  \prod_{d|n, d>1}S_{\Phi(d)}$, whence
    $\phi$ is Euler-function.
  \end{corollary}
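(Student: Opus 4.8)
The plan is to apply Theorem \ref{autgroup}, which already splits $Aut(\mathcal{P}(G))$ as a semidirect product $Aut(\overline{\mathcal{P}(G)}) \ltimes \prod_{B} S_{|B|}$, and to show that for cyclic $G$ of non-prime-power order the first factor is trivial while the second is exactly $\prod_{d \mid n,\, d>1} S_{\Phi(d)}$.

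First I would pin down the MEN-classes. Since $G$ is abelian, $C_G(a)=G$ for every $a$, and $|G|=n$ is not a prime power, so Lemma \ref{l2} applies to every element and tells us that each $gen(\langle a\rangle)$ is an MEN-subset. Because ``having equal closed neighbourhood'' is an equivalence relation whose classes are precisely the MEN-subsets, and because the sets $gen(\langle a\rangle)$ already partition the nontrivial elements of $G$ (each nontrivial element generates a unique cyclic subgroup, and the generators of a fixed cyclic subgroup of order $d$ number $\Phi(d)$), these generator-sets must be all of the MEN-subsets. Hence $V(\overline{\mathcal{P}(G)})$ is indexed by the divisors $d\mid n$ with $d>1$, the vertex for $d$ carries weight $\Phi(d)$, and the $S_{|B|}$-product in Theorem \ref{autgroup} is literally $\prod_{d\mid n,\,d>1} S_{\Phi(d)}$.

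The substance is then to prove $Aut(\overline{\mathcal{P}(G)})=1$. Concretely, $\overline{\mathcal{P}(G)}$ is the weighted comparability graph of the divisor poset: the vertices for $d$ and $d'$ are adjacent exactly when $d\mid d'$ or $d'\mid d$, and each carries weight $\Phi(d)$. I would invoke Corollary \ref{ha1}, which recovers the order $o(x_M)=d$ of the maximal-order element of a class purely from the weighted-graph data; since any weighted automorphism preserves weights and adjacencies, it preserves this invariant, and hence sends the order-$d$ class to an order-$d$ class. As there is a single vertex of each order $d$, every automorphism must fix every vertex, giving $Aut(\overline{\mathcal{P}(G)})=1$. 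Feeding this back into Theorem \ref{autgroup} collapses the semidirect product to the direct product $\prod_{d\mid n,\,d>1} S_{\Phi(d)}$.

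I expect the main obstacle to be making the order-recovery step genuinely non-circular: the comparability graph is undirected, so before one can read off $o(x_M)$ one must orient divisibility from the graph itself. The hypothesis that $n$ is not a prime power is exactly what breaks the symmetry here --- it guarantees, via the two distinct prime divisors of $n$, that the top class $gen(\langle g\rangle)$ of order $n$ is the \emph{unique} vertex adjacent to all others, so it is fixed by any automorphism; one then peels off the poset from the top, using Corollary \ref{ha1} at each stage to assign orders and confirm rigidity. By contrast, when $n=p^k$ the subgroup lattice is a chain, $\mathcal{P}(G)$ is complete, and this whole argument (and the statement) breaks down, which is why the non-prime-power assumption is indispensable.
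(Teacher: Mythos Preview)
Your proposal is correct. Both you and the paper start from Theorem~\ref{autgroup} and use Lemma~\ref{l2} to identify the MEN-classes as the sets $gen(\langle a\rangle)$, which yields the product $\prod_{d\mid n,\,d>1}S_{\Phi(d)}$ for the second factor. The divergence is in how the first factor $Aut(\overline{\mathcal{P}(G)})$ is shown to be trivial. The paper does not argue this inside $G$ at all: it invokes Theorem~\ref{nil} to split this factor as $\prod_i Aut(\overline{\mathcal{P}(P_i)})$ over the Sylow subgroups $P_i$, and then simply observes that each $P_i$ is a cyclic $p$-group, whose power graph is complete, so $\overline{\mathcal{P}(P_i)}$ consists of a single vertex and has trivial automorphism group. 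You instead stay inside $G$ and use Corollary~\ref{ha1} together with the uniqueness of cyclic subgroups of each order to show that every vertex of $\overline{\mathcal{P}(G)}$ is individually fixed. Your route is more self-contained for this particular corollary and makes the role of the hypothesis ``$n$ not a prime power'' visible at the level of $\overline{\mathcal{P}(G)}$ itself; the paper's route exhibits the result as an immediate specialisation of the nilpotent machinery already in place, and sidesteps the orientation subtlety you raise in your last paragraph, since on each Sylow factor there is literally nothing left to permute.
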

  \begin{proof}

    Since $|C(x)|$ is not prime power for all $x$$\in$$ G$, by Lemma \ref{l3},
      $|\overline{a}|$$=$$ \phi(o(a))$. So
      \begin{center}{
      $Aut(\mathcal{P}(G))$$=$$(Aut(\overline{\mathcal{P}(P_1)})\times \cdots \times Aut(\overline{\mathcal{P}(P_t)}))\ltimes \prod_{d|n, d>1}S_{\Phi(d)}$.}
      \end{center}
      But any sylow subgroup of $G$ is cyclic, and $|\overline{P_i}|=1$, as desired.
  \end{proof}

\section{abelian groups}

In this section we  certainly  calculate the automorphism group of  power graph of homocyclic and abelian  finite groups.\\

 Let $G$ be a finite abelian $p$-group and $x$ a nontrivial element of $G$.
 The height of $x$, denoted by $height(x)$, is the largest power
  $p^n$ of the prime $p$ such that $x $$\in$$ G^{p^n}$.
  A non-cyclic  group $G$ said a homocyclic group if $G$ be  a directed product of some
   copes of cyclic group of order $p^m$  for some integer $m$.   \\
We begin by a famous theorem in group theory which is played main rule in this section.
\begin{theorem} \label{d1}
Let $G$ be a finite abelian  group and $a$ be an element of  $G$ where $o(a)$$=$$exp(G)$.
Then there exist a subgroup $H$ of  $G$ such that $G$$=$$\langle$$ a$$\rangle$$ \times H$
\end{theorem}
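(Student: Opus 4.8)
The plan is to first reduce to the case of a $p$-group via the primary decomposition, and then to settle the $p$-group case by a maximal-complement argument. Write $G = \prod_p G_p$ as the internal direct product of its primary components. Decomposing $a = \prod_p a_p$ with $a_p \in G_p$, the hypothesis $o(a) = \exp(G)$ forces each $a_p$ to have order $\exp(G_p)$, i.e. to be of maximal order in its component. If for each $p$ I can produce a subgroup $H_p \le G_p$ with $G_p = \langle a_p \rangle \times H_p$, then setting $H = \prod_p H_p$ and using that the orders $o(a_p)$ are pairwise coprime (so that $\langle a \rangle = \prod_p \langle a_p \rangle$) immediately yields $G = \langle a \rangle \times H$. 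Thus it suffices to treat a finite abelian $p$-group.

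For the $p$-group case I would set $o(a) = p^n = \exp(G)$ and choose $H \le G$ of maximal order subject to $\langle a \rangle \cap H = \{1\}$; such an $H$ exists since $G$ is finite and $\{1\}$ is a candidate. The claim is that this maximality forces $G = \langle a \rangle H$, which together with the trivial intersection produces the desired internal direct product. To prove the claim I argue by contradiction in the quotient $\bar G = G/H$. Writing $\bar a$ for the image of $a$, the injectivity of the composite $\langle a \rangle \hookrightarrow \bar G$ (from $\langle a \rangle \cap H = \{1\}$) gives $o(\bar a) = p^n$, and since $\exp(\bar G) \mid \exp(G) = p^n$ this shows $o(\bar a) = \exp(\bar G)$, so $\bar a$ is again of maximal order. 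If $\langle a \rangle H \ne G$ then $\langle \bar a \rangle \ne \bar G$, and I would select $\bar b \notin \langle \bar a \rangle$ with $\bar b^{\,p} \in \langle \bar a \rangle$ (take a minimal $p$-power of any element outside $\langle \bar a\rangle$ that lands inside). Writing $\bar b^{\,p} = \bar a^{\,s}$ and comparing orders shows $p \mid s$, so after replacing $\bar b$ by $\bar b\,\bar a^{-s/p}$ I may assume $\bar b$ has order exactly $p$ and still lies outside $\langle \bar a \rangle$.

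The pull-back $H'$ of $\langle \bar b \rangle$ along $G \to \bar G$ then strictly contains $H$, with $|H'| = p\,|H|$, and one checks $\langle a \rangle \cap H' = \{1\}$: any nontrivial $a^{\,i} \in H'$ would give $\bar a^{\,i} \in \langle \bar b \rangle \cap \langle \bar a \rangle = \{1\}$ (the intersection being trivial because $\langle \bar b\rangle$ has prime order and $\bar b \notin \langle \bar a\rangle$), whence $a^{\,i} \in H$ and so $a^{\,i} = 1$. This contradicts the maximality of $H$, forcing $G = \langle a \rangle H$ and hence $G = \langle a \rangle \times H$. I expect the main obstacle to be precisely this step inside the quotient: arranging an element of order exactly $p$ outside $\langle \bar a \rangle$ and then verifying that enlarging $H$ by its pull-back keeps the intersection with $\langle a \rangle$ trivial. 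By comparison, the coprime reassembly in the reduction step and the mere existence of a maximal $H$ are routine.
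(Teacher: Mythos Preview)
Your argument is correct and is one of the standard proofs of this classical fact. The reduction to $p$-groups via primary decomposition is routine, and the maximal-complement argument in the $p$-group case is carried out carefully: the key step of producing an element $\bar b$ of order exactly $p$ outside $\langle \bar a\rangle$ uses $o(\bar a)=\exp(\bar G)$ to force $p\mid s$, and the verification that the enlarged $H'$ still meets $\langle a\rangle$ trivially is clean.

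As for comparison: the paper does not prove this theorem at all. It is introduced as ``a famous theorem in group theory'' and is quoted without proof, serving only as a tool for the subsequent results on homocyclic and abelian $p$-groups. So you have supplied a full, correct proof where the paper relies on the result as folklore; there is nothing to compare at the level of method.
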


\begin{lemma}\label{ta1}
Let $G$ be a homocyclic group. Then $Aut(G)$, the automorphism group of $G$, acts
transitively on the set of elements with  equal orders.
\end{lemma}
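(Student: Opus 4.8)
The plan is to realize the homocyclic group concretely as $G=\mathbb{Z}_{p^m}^n$ with $n\geq 2$, and to identify $Aut(G)$ with the group $GL_n(\mathbb{Z}_{p^m})$ of invertible $n\times n$ matrices over the ring $\mathbb{Z}_{p^m}$, acting on $G$ viewed as the free $\mathbb{Z}_{p^m}$-module of rank $n$ (column vectors). Under this identification an automorphism is precisely a $\mathbb{Z}_{p^m}$-linear bijection, so it commutes with multiplication by the scalar $p^k$; this linearity is what makes the whole argument go through.

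First I would record how the order of an element is read off from its coordinates. For $x=(x_1,\ldots,x_n)$ the order is $p^{m-k}$, where $k=\min_i v_p(x_i)$ is the least $p$-adic valuation among the coordinates (equivalently, in the height terminology above, $height(x)=p^k$ and $o(x)\cdot height(x)=p^m$). Hence $o(x)=o(y)$ forces $x$ and $y$ to share the same minimal valuation $k$, and each factors as $x=p^k u$, $y=p^k v$, where $u$ and $v$ are \emph{unimodular}, meaning at least one coordinate is a unit of $\mathbb{Z}_{p^m}$ (coprime to $p$). Conversely, every such $p^k u$ has order exactly $p^{m-k}$.

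This reduces the statement to a single transitivity fact: $GL_n(\mathbb{Z}_{p^m})$ acts transitively on the set of unimodular vectors. Granting it, if $\varphi(u)=v$ then $\varphi(x)=\varphi(p^k u)=p^k\varphi(u)=p^k v=y$ by linearity, which is exactly the desired conclusion. I expect this transitivity claim to be the only genuine obstacle, and the way I would handle it is to show every unimodular $u$ is the image of the standard vector $e_1=(1,0,\ldots,0)$: after permuting coordinates (a permutation matrix lies in $GL_n$) we may assume the first coordinate $u_1$ is a unit, and then the matrix whose first column is $u$ and whose remaining columns are $e_2,\ldots,e_n$ has determinant $u_1$, a unit, so it lies in $GL_n(\mathbb{Z}_{p^m})$ and carries $e_1$ to $u$. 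Since every unimodular vector lies in the orbit of $e_1$, any two lie in a common orbit, giving transitivity.

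The one point demanding care is the underlying claim that $Aut(\mathbb{Z}_{p^m}^n)\cong GL_n(\mathbb{Z}_{p^m})$ together with the fact that the local ring $\mathbb{Z}_{p^m}$ permits a unimodular vector to be completed to a basis; both are standard for free modules over a local ring, but I would state them explicitly rather than assume them, since the entire reduction rests on the linearity of automorphisms and on the unit-determinant criterion for invertibility over $\mathbb{Z}_{p^m}$.
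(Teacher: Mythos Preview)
Your argument is correct. The underlying strategy coincides with the paper's: reduce to transitivity on elements of maximal order $p^m$ by writing an element of order $p^{m-k}$ as $p^k$ times such an element, and then show that any element of maximal order can be completed to a direct-sum decomposition of $G$. The difference is purely in the packaging. The paper works intrinsically: it invokes Theorem~\ref{d1} (an element of order $\exp(G)$ generates a direct summand) to write $G=\langle x\rangle\times H_1=\langle y\rangle\times H_2$ with $H_1\cong H_2$, and then splices an isomorphism $H_1\to H_2$ together with $x\mapsto y$. You instead fix coordinates, identify $Aut(G)$ with $GL_n(\mathbb{Z}_{p^m})$, and complete a unimodular vector to an invertible matrix explicitly. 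Your route buys a concrete matrix and avoids quoting the direct-summand theorem, at the cost of needing the identification $Aut(\mathbb{Z}_{p^m}^n)\cong GL_n(\mathbb{Z}_{p^m})$ and the unit-determinant invertibility criterion; the paper's route is coordinate-free and hooks directly into Theorem~\ref{d1}, which it has already stated for later use. Either way the content is the same: ``unimodular'' is exactly ``order $p^m$'', and ``completes to an invertible matrix'' is exactly ``generates a direct summand''.
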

\begin{proof}
Let $G $$\cong$$  \mathbb{Z}_{p^m}^n$ and $a,b$  be two  elements of order $p^t$.
Since $G$ is homocyclic, \\ $height(a)$$=$$height(b)$$=$$p^{m-t}$. So there exist $x,y$$\in$$ G$ such that $x^{p^{m-t}}$$=$$a$
 and $y^{p^{m-t}}$$=$$b$.
By Theorem \ref{d1}, there exist subgroups $H_1, H_2$ such that
 $G$$=$$\langle x \rangle \times H_1$$=$$\langle y \rangle \times H_2$.
  From which   $H_1$$\cong$$ H_2 $$\cong$$ \mathbb{Z}_{p^m}^{n-1}$.
  Assume that $H_1$$\cong$$_{\varphi} H_2$. Now $\psi$ by definition
   $\psi(x^ih)$$=$$y^i\varphi(h)$ where $h$$\in$$ H_1$ and $0\leq i\leq p^m$, is an automorphism of $G$ and
    $\psi(a)$$=$$b$, as required.
\end{proof}

\begin{theorem}
For $G$$\cong$$ \mathbb{Z}_{p^m}^n$,
\begin{center}{
$Aut(\mathcal{P}(G))$$=$$(( \cdots(S_{k_m}  \wr  \cdots)\wr S_{k_2})\wr S_{k_1})
\ltimes (\prod_{i=1}^m S_{(p^i-p^{i-1})}^{r_i})$,}
\end{center}
 where $r_t$$=$$(p^{tn}-p^{(t-1)n})/(p^t-p^{t-1})$,
$k_1$$=$$r_1$ and $k_{i+1}$$=$$r_{i+1}/r_i$.
\end{theorem}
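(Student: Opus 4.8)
The plan is to combine the general reduction in Theorem \ref{autgroup} with an explicit description of the weighted reduced graph $\overline{\mathcal{P}(G)}$ for $G \cong \mathbb{Z}_{p^m}^n$. By Theorem \ref{autgroup} it suffices to carry out two tasks: (a) list the $MEN$-subsets together with their sizes, which produces the factor $\prod_{\overline{x}}S_{|\overline{x}|}$; and (b) compute $Aut(\overline{\mathcal{P}(G)})$, which produces the acting factor. I treat these in turn.

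For (a), I would first observe that $G$ is abelian and non-cyclic, so every maximal cyclic subgroup $\langle a\rangle$ has order $\exp(G)=p^m$ and satisfies $C_G(a)=G\neq\langle a\rangle$. Moreover every $b\in G$ of order $p^i$ has height $p^{m-i}$, so $b=c^{p^{m-i}}$ for some $c$ of order $p^m$, whence $b$ lies in a maximal cyclic subgroup. Thus Lemma \ref{l3} applies and shows that $gen(\langle b\rangle)$ is an $MEN$-subset for every $b\in G$. Since the sets $gen(\langle b\rangle)$ already partition $G\setminus\{1\}$, they are precisely all the $MEN$-subsets, so alternative (1) of Lemma \ref{hal3} never occurs here. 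An $MEN$-subset arising from a cyclic subgroup of order $p^i$ has size $\phi(p^i)=p^i-p^{i-1}$, and the number of such cyclic subgroups is $r_i=(p^{in}-p^{(i-1)n})/(p^i-p^{i-1})$, since $G$ has $p^{in}-p^{(i-1)n}$ elements of order $p^i$. This yields $\prod_{\overline{x}}S_{|\overline{x}|}=\prod_{i=1}^m S_{(p^i-p^{i-1})}^{r_i}$, the second factor.

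For (b), I identify the vertices of $\overline{\mathcal{P}(G)}$ with the cyclic subgroups of $G$, the weight of a vertex being $\phi$ of the order of the corresponding subgroup, and adjacency being comparability under inclusion. Because weight determines the order, any automorphism of the weighted graph preserves orders, and comparability together with order fixes the direction of inclusion; hence $Aut(\overline{\mathcal{P}(G)})$ is exactly the automorphism group of the poset of cyclic subgroups. I then exhibit its tree structure: each cyclic subgroup of order $p^{i+1}$ contains a unique one of order $p^i$, namely its $p$-th power, so declaring ``parent $=$ $p$-th power'' turns the cyclic subgroups into a rooted forest whose roots are the $r_1$ subgroups of order $p$ and whose leaves are the maximal cyclic subgroups. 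Using Lemma \ref{ta1}, $Aut(G)$ is transitive on cyclic subgroups of each fixed order; consequently each order-$p^i$ subgroup lies below the same number $k_{i+1}=r_{i+1}/r_i$ of order-$p^{i+1}$ subgroups, so the forest is uniform with branching data $k_1,k_2,\dots,k_m$.

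It remains to compute the automorphism group of this weighted comparability graph. Its connected components are the $r_1=k_1$ subtrees hanging below the order-$p$ subgroups, so Theorem \ref{t2} gives $Aut(\overline{\mathcal{P}(G)})\cong Aut(T)\wr S_{k_1}$, where $T$ is one such subtree. Within $T$ the root is the unique minimum-weight vertex and is adjacent to every other vertex of $T$, hence is fixed by every automorphism; deleting it splits $T$ into $k_2$ mutually isomorphic components, and a second application of Theorem \ref{t2} gives $Aut(T)\cong Aut(T')\wr S_{k_2}$. Iterating this peeling of roots down through the levels yields $Aut(T)\cong S_{k_m}\wr\cdots\wr S_{k_2}$, and therefore the acting factor $((\cdots(S_{k_m}\wr\cdots)\wr S_{k_2})\wr S_{k_1})$. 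Combining (a) and (b) through Theorem \ref{autgroup} gives the stated decomposition. The main obstacle is the rigorous execution of (b): first, that the weighted reduced graph genuinely is the comparability graph of a uniform rooted forest, which rests on the homogeneity furnished by Lemma \ref{ta1} to force the constant branching numbers $k_{i+1}=r_{i+1}/r_i$; and second, that the iterated use of Theorem \ref{t2} is legitimate, i.e. at each stage the deleted root is canonically fixed (being the unique lowest-weight vertex adjacent to its whole component) and the hypothesis of Theorem \ref{t2} on permuting isomorphic components holds for the weighted graph at every level.
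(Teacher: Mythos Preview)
Your proposal is correct and follows essentially the same route as the paper: apply Theorem~\ref{autgroup}, identify each $MEN$-subset as $gen(\langle b\rangle)$ of size $\phi(p^i)$ (the paper invokes Lemma~\ref{l2}, but as you noticed Lemma~\ref{l3} is the one that actually applies in this $p$-group setting), then use the transitivity of Lemma~\ref{ta1} together with repeated applications of Theorem~\ref{t2}, peeling off the unique minimum-weight universal vertex at each level to obtain the iterated wreath product. Your description of $\overline{\mathcal{P}(G)}$ as the comparability graph of a uniform rooted forest is just a slightly more structural phrasing of the same iterative argument the paper carries out.
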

\begin{proof}
By Theorem \ref{t1}, $Aut(\mathcal{P}(G))$$\cong$$ Aut(\overline{\mathcal{P}(G)}) \ltimes \prod_{B
 \in V(\overline{\mathcal{P}(G)})}S_{|B|}$.
   Since $G$ is  non-cyclic abelian  group, by Lemma \ref{l2}, $|\overline{a}|$$=$$p^t-p^{t-1}$, where
    $o(a)$$=$$p^t$.\\
  Set $R_t$$=$$\{\overline{x}| o(x)$$=$$p^t\}$ and $r_t$$=$$|R_t|$. We know that $G$ has exactly
   $p^{tn}-p^{(t-1)n}$ elements of order $p^t$,  thus
        $r_t$$=$$(p^{tn}-p^{(t-1)n})/(p^t-p^{t-1})$.  From
     which  the second part of semi-directed product of theorem has been found.\\
     Now we  want to find the first part of that product.

In  a $p$-group, two elements $a,b$  are in one  connected components
 of $\mathcal{P}(G)$ if and only if $\langle a \rangle \cap \langle b \rangle \neq \{1\}$.
 So $\mathcal{P}(G)$ has exactly $r_1$ connected components. On the other hand  by Lemma \ref{ta1},
 $Aut(G)$, and so $Aut(\mathcal{P}(G))$ acts transitively on the set of elements of order $p$.
 Thus   $Aut(\mathcal{P}(G))$ acts transitively   on $R_1$. Consequently all connected components
 of $\overline{\mathcal{P}(G)}$ are isomorphic. By Theorem \ref{t2}, $Aut(\overline{\mathcal{P}(G)})$$=$$Aut(K_1)
 \wr S_{r_1}$ where $K_1$ be a one of connected components of $\overline{\mathcal{P}(G)}$.
 But there is only one element, say  $\overline{a_1}$, in  $V(K_1)$ with properties $|\overline{a}|$$=$$p-1$
 and  $N[\overline{a_1}]$$=$$V(K_1)$, from which $Aut(K_1)$$=$$Aut(K_1-\overline{a_1})$.
    Now two elements $\overline{a},\overline{b}$  are in one  connected components
 of $K_1-\overline{a_1}$ if and only if $\langle a \rangle \cap \langle b \rangle \neq
 \langle a_1 \rangle$.
Since all connected  components of  $\overline{\mathcal{P}(G)}$ are isomorphic and $Aut(\mathcal{P}(G))$ acts
transitively on $R_2$, then  $K_1-\overline{a_1}$ has exactly
$k_2$$=$$r_2/r_1$  isomorphic connected  components. It follows that $ Aut(K_1)$$\cong$$ Aut(K_2)\wr S_{k_2}$  whence $ K_2$ be a connected  component of $K_1-\overline{a_1}$.\\
By following  this process the proof is completed.

\end{proof}

           Let $G$ be a finite $p$-group and $exp(G)$$=$$p^n$. Set $\Omega_t(G)$$=$$\{x|x^{p^t}$$=$$1\}$ and,
 \begin{center}{
 $H_{t}(G)$$=$$\{x $$\in$$ G| o(x)$$=$$p, height(x)$$=$$p^{t-1}\}$.}
 \end{center}

   \begin{lemma}\label{h111}
    Let $G$ be an abelian $p$-group and $x$ is an element of order $p$. Then there is an element $a$  and subgroup $L$ of $G$ such that
    $ G$$=$$\langle a \rangle$$\times$$L$ and $x\in \langle a \rangle $.
   \end{lemma}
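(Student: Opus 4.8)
The plan is to argue by induction on $|G|$, using the direct-factor splitting of Theorem \ref{d1} to peel off a cyclic subgroup of maximal order and then fuse it with a factor produced by the induction hypothesis. When $|G| = p$ the group is $\langle x\rangle$ itself, so we take $a = x$ and $L = 1$; hence assume $|G| > p$.

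First I would choose $y \in G$ with $o(y) = \exp(G) = p^{n}$; by Theorem \ref{d1} there is a subgroup $K$ with $G = \langle y\rangle \times K$. Writing $x = y^{e} k$ with $k \in K$ and using $o(x) = p$ in the internal direct product, one obtains $y^{ep} = 1$ and $k^{p} = 1$; in particular $o(k) \mid p$ and $y^{e}$ lies in the unique order-$p$ subgroup $\langle y^{p^{n-1}}\rangle$ of $\langle y\rangle$. If $k = 1$ then $x = y^{e} \in \langle y\rangle$ is nontrivial, so $a = y$ and $L = K$ finish the proof. If $k \neq 1$ then $o(k) = p$, and since $|K| < |G|$ the induction hypothesis applied to $(K,k)$ yields $b \in K$ and $L_{0} \le K$ with $K = \langle b\rangle \times L_{0}$ and $k \in \langle b\rangle$. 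At this point $x$ lives in $M := \langle y\rangle \times \langle b\rangle$, the problem is reduced to producing a single cyclic summand of $M$ through $x$, and the complement for $G$ will then be $\langle y\rangle \times L_{0}$.

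The crux is therefore the two-generator computation. Put $p^{r} = o(b)$, and note $r \le n$ because $p^{n} = \exp(G)$. Recording $k = b^{j p^{r-1}}$ with $p \nmid j$ and replacing $y$ by a suitable prime-to-$p$ power, I may normalize $x = y^{p^{n-1}} b^{j p^{r-1}}$; if the $\langle y\rangle$-component is trivial then $x \in \langle b\rangle$ and we take $a = b$, so assume it is not. I then set $c := y^{p^{n-r}} b^{j}$. A direct check gives $o(c) = p^{r}$ and $c^{p^{r-1}} = y^{p^{n-1}} b^{j p^{r-1}} = x$, so $x \in \langle c\rangle$; moreover $c^{i} \in \langle y\rangle$ forces $b^{ij} = 1$, hence $p^{r} \mid i$ and $c^{i} = 1$, giving $\langle c\rangle \cap \langle y\rangle = 1$. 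Comparing orders, $|\langle c\rangle|\,|\langle y\rangle| = p^{r} p^{n} = |M|$, so $M = \langle c\rangle \times \langle y\rangle$. Consequently $G = \langle c\rangle \times (\langle y\rangle \times L_{0})$ with $x \in \langle c\rangle$, as required.

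I expect the main obstacle to be precisely this merging step: the cyclic factor $\langle y\rangle$ coming from Theorem \ref{d1} and the factor $\langle b\rangle$ coming from the induction must be combined into one cyclic group containing $x$, and the exponent $p^{n-r}$ on $y$ must be chosen exactly so that $c^{p^{r-1}}$ recovers both components of $x$ while $\langle c\rangle$ still meets the chosen complement trivially. Keeping careful track of the height $p^{n-1}$ of $x$ inside $\langle y\rangle$ against the height $p^{r-1}$ inside $\langle b\rangle$, and normalizing away the prime-to-$p$ units, is where the bookkeeping must be done correctly.
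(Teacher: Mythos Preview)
Your proof is correct, but it proceeds quite differently from the paper. The paper writes $G = G_{1}\times\cdots\times G_{k}$ as a product of non-isomorphic homocyclic blocks with $\exp(G_{i}) = p^{n_{i}}$, $n_{1}<\cdots<n_{k}$, and reads off the height of $x$: if $\mathrm{height}(x) = p^{n_{t}-1}$ then $x_{1}=\cdots=x_{t-1}=1$ and $o(x_{t})=p$. Choosing any $a$ with $a^{p^{n_{t}-1}}=x$, the component $a_{t}$ has order $p^{n_{t}}=\exp(G_{t})$, so Theorem~\ref{d1} splits $G_{t}=\langle a_{t}\rangle\times K$; since the remaining components $a_{i}$ ($i\neq t$) lie in the complementary factor $L$ and $o(a)=o(a_{t})$, a shear gives $G=\langle a\rangle\times L$ with $x\in\langle a\rangle$. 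Thus the paper locates the correct cyclic summand in one shot via the height of $x$ against the block exponents.

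Your argument avoids the homocyclic decomposition and the height bookkeeping entirely, trading them for an induction together with the explicit two-generator merge $c = y^{p^{n-r}} b^{j}$. This is more elementary---it needs only Theorem~\ref{d1} and an order count in the rank-two group $\langle y\rangle\times\langle b\rangle$---and in fact the merge step you isolate is exactly the ``$o(a)=o(a_{t})$ hence $\langle a\rangle$ is still a direct summand'' shear that the paper leaves implicit at the end. The paper's approach, on the other hand, makes the connection to $H_{t}(G)$ transparent, which is what feeds directly into Corollary~\ref{h1}.
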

      \begin{proof}
           Since $G$ is abelian,  then  $G $$\cong$$  G_1 \times \ldots \times G_k$
           where  $G_1,  \ldots, G_k$ are non-isomorphic homocyclic groups. Assume that $exp(G_i)$$=$$p^{n_i}$ and
           $p^{n_1}<  \cdots< p^{n_t}$. Then
           $x$$=$$(x_1, \ldots, x_k) $$\in$$ G$ has order $p$ if and only if $max\{o(x_i)| i$$\in$$\{1, \ldots, k\} \}$$=$$p$.
           Also $height(x)$$\in$$ \{p^{n_1-1},  \ldots, p^{n_t-1}\}$ and,
           $x$$\in$$ H_{n_t}(G)$ if and only if $x_1$$=$$x_2$$=$$ \cdots$$=$$x_{t-1}$$=$$1$ and $o(x_t)$$=$$p$.
           Assume that  $a$$=$$(a_1,  \ldots, a_k)$$\in$$ G$  and $a^{p^{n_t-1}}$$=$$x$.\\
           Therefore $o(a_t)$$=$$p^{n_t}$. By Theorem \ref{d1}, there is a subgroup $K$
           such that
           $G_t$$=$$\langle a_t \rangle \times K$ and then      $G$$=$$\langle a_t \rangle \times  G_1 \times  \cdots \times G_{t-1} \times K \times G_{t+1}\times  \cdots \times G_k$.
           So there is a subgroup $L$ with $G$$=$$\langle a_t \rangle \times L$. But $ a_1,  \ldots, a_{t-1}, a_{t+1}, \ldots, a_k \in L$ and $o(a)=o(a_t)$, thus  $G$$=$$\langle a \rangle \times L$.
   \end{proof}

   \begin{corollary}\label{h1}
    Let $G$ be an abelian $p$-group. Then $Aut(G)$, the automorphism group of $G$, acts
    transitively on $H_t(G)$ when $H_t$ is a nonempty set.
   \end{corollary}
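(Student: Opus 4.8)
The plan is to mirror the argument of Lemma \ref{ta1}, now using Lemma \ref{h111} as the key structural input in place of the homocyclic computation. Given two elements $x,y \in H_t(G)$, I would first invoke Lemma \ref{h111} to produce two decompositions $G = \langle a \rangle \times L$ and $G = \langle b \rangle \times M$ with $x \in \langle a \rangle$ and $y \in \langle b \rangle$. The first task is to pin down the orders of $a$ and $b$. Since $x$ lies in the cyclic factor $\langle a \rangle$ and has trivial $L$-component, its height in $G$ equals its height inside the cyclic group $\langle a \rangle$; as $o(x)=p$, the element $x$ must generate the unique subgroup of order $p$ of $\langle a \rangle$, whose height is $p^{s-1}$ when $o(a)=p^s$. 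Matching this against $height(x)=p^{t-1}$ forces $o(a)=p^t$, and symmetrically $o(b)=p^t$.

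The second step is to establish $L \cong M$. Both decompositions exhibit $G$ as a copy of $\mathbb{Z}_{p^t}$ times a complement, so $\mathbb{Z}_{p^t}\times L \cong \mathbb{Z}_{p^t}\times M$; by the cancellation property of finite abelian groups (a consequence of the uniqueness of the invariant-factor decomposition in the fundamental theorem), this yields $L \cong M$. I would then fix an isomorphism $\beta : L \to M$.

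The third step constructs the required automorphism explicitly, exactly in the spirit of the map $\psi$ built in Lemma \ref{ta1}. Write $x = a^{c\,p^{t-1}}$ and $y = b^{d\,p^{t-1}}$ with $c,d$ coprime to $p$. Choose $e$ coprime to $p$ satisfying $ec \equiv d \pmod p$, so that $a \mapsto b^{e}$ is an isomorphism $\langle a \rangle \to \langle b \rangle$ sending $x$ to $y$. Combining this with $\beta$, the map defined by $\psi(a^{i}\ell) = b^{ei}\beta(\ell)$ for $\ell \in L$ is an automorphism of $G$ with $\psi(x)=y$, which establishes transitivity.

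I expect the one step requiring genuine care to be the second, namely justifying $L \cong M$. Lemma \ref{h111} only guarantees that complements exist, not that they are canonical or uniquely determined, so the uniqueness side of the fundamental theorem (cancellation) is essential rather than cosmetic here. By contrast, the height computation of the first step and the explicit formula of the third are routine once the orders $p^t$ and the isomorphism $\beta$ are in hand.
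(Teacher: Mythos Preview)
Your argument is correct and follows the same template the paper intends: invoke Lemma~\ref{h111} for each of $x,y$, match the two direct decompositions, and assemble $\psi$ exactly as in Lemma~\ref{ta1}. The only minor difference is in the step you flagged as delicate: the paper does not need cancellation, because the \emph{proof} of Lemma~\ref{h111} already pins down both $o(a)=p^{t}$ and the isomorphism type of the complement explicitly (namely $L\cong G_1\times\cdots\times G_{t-1}\times K\times G_{t+1}\times\cdots\times G_k$, which visibly depends only on $t$), so $L\cong M$ is immediate. Your height computation and appeal to cancellation are a perfectly valid alternative that uses Lemma~\ref{h111} as a black box rather than its proof.
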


   % \begin{corollary}
  %   Let $G$ be an abelian  $p$-group and   $b$$\in$$ H_m(G)$.
       %   Then there are a subgroup $L$ and an element $a$$\in$$ G$ such that
       %   $G$$=$$\langle a \rangle \times L$, $b $$\in$$ \langle a \rangle$ and,\\
       %   $deg(b)$$=$$ (p-1) +p(p-1)|\Omega_1(L)|+p^2(p-1)|\Omega_2(L)|+ ... +p^{m-1}(p-1)|\Omega_{m-1}(L)|$.

   % \end{corollary}

 %   \begin{proof}
     %      Let $G$$=$$\langle a \rangle \times L$ and, $b $$\in$$ \langle a \rangle$.
     %      An element $(x, y) $$\in$$ N((b,1))$  has order $p^{s}$,
     %      if and only $o(x)$$=$$p^s$ and $o(y)<p^{s-1}$.
      %     So the number of elements of order $p^s$  in   $N((a,1))$ is equal to\\
      %     $|\{(x,y) $$\in$$ \langle a \rangle \times L | o(x)$$=$$p^s, o(y)<p^s \}|$$=$$p^{s-1}(p-1)|\Omega_{s-1}(L)|$. Therefore\\
     %      $deg(b)$$=$$(p-1) +p(p-1)|\Omega_1(L)|+p^2(p-1)|\Omega_2(L)|+ ... +p^{m-1}(p-1)|\Omega_{m-1}(L)|$.

    %      \end{proof}

          \begin{lemma}\label{j1}
          Let $G$ be an  abelian $p$-group and
          $b$$\in$$G$ be a nontrivial  element of height $p^t$.
          Then
          $Aut(\mathcal{P}(N_{\overline{G}}(\overline{b})-\{\overline{x}|x
          $$\in$$ \langle b \rangle \}))$, acts
          transitively on the set of  elements of order $po(b)$
          with  equal heights in $N_G(b)-\langle b \rangle$ .

          \end{lemma}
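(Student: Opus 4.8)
The plan is to translate this assertion about graph automorphisms into an orbit question for $Aut(G)$, and then to build the required automorphisms by splitting off a cyclic direct factor. First I would describe the relevant vertices concretely. A class $\overline{c}$ lies in $N_{\overline{G}}(\overline{b})$ but not among the $\overline{x}$ with $x\in\langle b\rangle$ exactly when $\langle b\rangle\subsetneq\langle c\rangle$; if moreover $o(c)=p\,o(b)$, then the unique subgroup of order $o(b)$ inside $\langle c\rangle$ is $\langle c^{p}\rangle$, so the condition becomes $\langle c^{p}\rangle=\langle b\rangle$. Replacing $c$ by a suitable power $c^{m}$ with $m$ coprime to $p$, I may assume $c^{p}=b$ on the nose; this changes neither $\overline{c}$ (two generators of the same cyclic subgroup have the same closed neighbourhood in $\mathcal{P}(G)$, hence the same $MEN$-class) nor the height of $c$ (powers by units preserve height). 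Thus for each prescribed height $p^{r}$ I must show that the representatives $c$ with $c^{p}=b$ and $height(c)=p^{r}$ form a single $Aut(G)$-orbit and that this suffices.

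The key simplification is that I need only produce honest group automorphisms. If $\sigma\in Aut(G)$ satisfies $\sigma(c_{1})=c_{2}$ for two such representatives, then $\sigma(b)=\sigma(c_{1}^{p})=c_{2}^{p}=b$; hence $\sigma$ fixes $\overline{b}$, stabilises $\{\overline{x}\mid x\in\langle b\rangle\}$ setwise, preserves $N_{\overline{G}}(\overline{b})$, and therefore restricts to an automorphism of the displayed subgraph carrying $\overline{c_{1}}$ to $\overline{c_{2}}$. So the lemma reduces to the purely group-theoretic claim that two elements $c_{1},c_{2}$ of equal order with $c_{1}^{p}=c_{2}^{p}=b$ and $height(c_{1})=height(c_{2})$ lie in one $Aut(G)$-orbit; note that the constraint $\sigma(b)=b$ then comes for free.

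To prove this claim I would imitate the argument behind Corollary \ref{h1} and Lemma \ref{h111}. Writing $p^{r}=height(c_{i})$, I choose $e_{i}$ of largest possible order with $e_{i}^{p^{r}}=c_{i}$; since the height is exactly $p^{r}$, $e_{i}$ is not itself a $p$-th power and $o(e_{i})=p^{r}o(c_{i})$. The aim is to realise $\langle e_{i}\rangle$ as a direct factor $G=\langle e_{i}\rangle\times H_{i}$ via Theorem \ref{d1}. Granting this, $H_{1}\cong H_{2}$, since both are complementary in $G$ to cyclic direct factors of the same order $o(e_{1})=o(e_{2})$; if $H_{1}\cong_{\varphi}H_{2}$, then $\psi(e_{1}^{j}h)=e_{2}^{j}\varphi(h)$ (with $h\in H_{1}$) defines an automorphism of $G$ with $\psi(e_{1})=e_{2}$, whence $\psi(c_{1})=\psi(e_{1}^{p^{r}})=e_{2}^{p^{r}}=c_{2}$, exactly as needed. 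This is the same factor-to-factor construction used in Lemma \ref{ta1}.

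The main obstacle is the step asserting that $\langle e_{i}\rangle$ is a direct factor, i.e. that $e_{i}$ may be arranged with $o(e_{i})=exp(G)$, equivalently that $\langle e_{i}\rangle$ is pure. The hypothesis $height(c_{i})=p^{r}$ controls purity only at the top level, so one must bootstrap to purity at every level. I expect to handle this precisely as in Lemma \ref{h111}: decompose $G$ into homocyclic factors, locate $c_{i}$ inside the factor of largest exponent that actually meets it, split a cyclic summand off that factor, and absorb the remaining factors into $H_{i}$. The equal-height hypothesis is exactly what places $c_{1}$ and $c_{2}$ into corresponding positions and makes their complementary summands isomorphic; phrased invariantly, the full height-sequences of $c_{1}$ and $c_{2}$ coincide (automatically, since $c_{1}^{p}=c_{2}^{p}$ and $height(c_{1})=height(c_{2})$), and the structure theorem is the tool that converts this coincidence into the automorphism $\psi$.
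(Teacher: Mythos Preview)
Your reduction is sound and is the cleanest part of the argument: once you normalise to $c_1^{p}=c_2^{p}=b$, any $\sigma\in Aut(G)$ with $\sigma(c_1)=c_2$ automatically fixes $b$, hence stabilises $N_{\overline{\mathcal{P}(G)}}(\overline{b})$ and the classes coming from $\langle b\rangle$, and so restricts to an automorphism of the subgraph in the statement. Your closing sentence is also correct: since $c_1^{p}=c_2^{p}$ and $height(c_1)=height(c_2)$, the full height (Ulm) sequences of $c_1$ and $c_2$ agree, and the standard orbit criterion for finite abelian $p$-groups then produces the desired $\sigma$. With that invocation the proof is complete, and it is genuinely different from the paper's: the paper never appeals to height sequences but instead fixes one decomposition $K=\langle N_G(b)\rangle=L\times\langle a\rangle$ with $a^{p^{t}}=b$, writes every element of $N_G(b)$ in these coordinates, and builds the automorphism by hand in two cases according to where the height of $u,v$ is realised. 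Your route is shorter and more conceptual; the paper's route is self-contained and yields the explicit coordinate description used in Corollary~\ref{al4}.

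However, the intermediate ``split off $\langle e_i\rangle$'' plan, together with the proposed patch ``handle this precisely as in Lemma~\ref{h111}'', does not work as stated, and you should not present it as the main line of argument. Having $height(e_i)=p^{0}$ does not force $\langle e_i\rangle$ to be pure, and an element of order bigger than $p$ need not lie in any cyclic direct summand. Concretely, take $G=\mathbb{Z}_{p^{3}}\times\mathbb{Z}_p$, $b=(p^{2},0)$, and $c=(p,1)$: then $c^{p}=b$, $o(c)=p\,o(b)$, $height(c)=p^{0}$ so $e=c$, yet $\langle(p,1)\rangle$ is not a direct summand of $G$ and $(p,1)$ is contained in no cyclic direct summand at all. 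Lemma~\ref{h111} succeeds for order-$p$ elements precisely because their height already pins down the smallest homocyclic block they meet; for higher-order elements this fails. So either invoke the height-sequence criterion outright (as you do at the end), or adopt the paper's device of fixing a single decomposition $L\times\langle a\rangle$ in advance and constructing the automorphism in those fixed coordinates rather than trying to split a new summand for each $c_i$.
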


          \begin{proof}
          Let  $K$$=$$\langle N_G(b)\rangle $,  $a^{p^t}$$=$$b$ and $o(a)$$=$$p^to(b)$. Since
            $o(a)$$=$$exp(K)$, there is a subgroup  $L$ such that $K$$=$$L$$\times$$ \langle a\rangle$. Suppose $(x,y)$$\in$$ N_G(b)$$-$$\langle b\rangle $ and $(x,y)^n$$=$$(1,b)$. Then $x^n$$=$$1$ and $y^n$$=$$b$ and
          consequently $o(x)|p^t$.
          So there are  non-isomorphic  homocyclic subgroups $L_1, \ldots,  L_m$
          such that  $ exp(L_1)$$<$$ \ldots$$ <$$exp(L_{m-1})$$<p^t$  and, $L_m$$=$$1$ or $ exp(L_m)$$=$$p^t$,  and
          $L$$=$$ L_1 \times \cdots \times L_{m-1}\times L_m^p$. Set $M$$=$$L_1 \times \cdots \times L_m$.
          Two elements $u$$=$$(x_1, \ldots, x_m, x)$ and $v$$=$$(y_1, \ldots, y_m, y)$   of order $po(b)$ in $N(b)$ have equal heights
          if and only if $o(x)$$=$$o(y)$$=$$po(b)$,
           \begin{center}{
           $\max \{o(x_1), \ldots, o(x_m), o(y_1), \ldots, o(y_m)\}|p  $}
           \end{center}
          and,
          \begin{center}{
           $\min \{i|x_i\neq 1, 1\leq i \leq m-1 \}$$=$$\min\{i|y_i \neq 1, 1\leq i \leq m-1\}$.}
           \end{center}
 we consider two cases.\\

          \textbf{Case 1}. $u, v$$\in$$ H_{exp(L_i)}$ for some $i<m$. By the proof of  Lemma \ref{h1},
          $Aut(M)$  has element $\varphi$ such that $\varphi(x_1, \ldots, x_m)$$=$$(y_1, \ldots, y_m)$ and $\varphi$ is identity on
          $L_m$. Thus $\psi$ by definition $\psi(g, a^i)$$=$$(\varphi(g),a^{ij})$ when $x^j$$=$$y$, is a group automorphism and, $\varphi(N(b)-\langle b\rangle)=N(b)-\langle b\rangle$, as required.\\

          \textbf{Case 2}. Let  $x_1$$=$$ \ldots $$=$$x_{m-1}$$=$$y_1$$=$$\ldots $$=$$y_{m-1}$$=$$1$.  Then $height(u)$$=$$height(v)$$=$$height(x)$$=$$p^{t-1}$ and there exist            $c$$\in$$ L_m \times \langle a\rangle$ such that  $o(c)$$=$$o(a)$, $u$$\in$$ \langle c \rangle$.
                          Thus $M$$=$$L\times \langle c \rangle$ and, there is $\psi $$\in$$ Aut(G)$ such that
              $\psi(a)$$=$$c$. Consequently, $\psi(x)$$=$$u$  completes the proof.

           \end{proof}
           For $x$$\in$$ G$, set $\widehat{x}$$=$$\overline{N_G(x)-\langle x \rangle}$.
          \begin{corollary}\label{al4}
          By the hypothesis of last Lemma,  
          \begin{center}
          {$Aut(\widehat{b})$$=$$
          (Aut(\widehat{(x_1,c)}) \wr S_{k_1}) \times  \ldots \times (Aut(\widehat{(x_{m-1},c)}) \wr S_{k_{m-1}})
          \times (Aut(\widehat{c})\wr S_{k_m}),$} 
          \end{center}
 where $x_i $$\in$$ H_{s_i}(\Omega_{t+1}(L))$,
           $k_i$$=$$|H_{s_i}(\Omega_{t+1}(L))|$, $k_m$$=$$p^r$,  $s_i$$=$$exp(L_i)$, $r$ is the number of direct factor of $L_m$ and, $c$ is an element of order $po(b)$ in $\langle a \rangle$.
          \end{corollary}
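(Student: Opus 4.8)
The plan is to realise $\widehat{b}$ as a disjoint union of ``cones,'' one over each index-$p$ cyclic overgroup of $\langle b\rangle$, to strip the apex off each cone, and then to sort the cones by height so that Theorem \ref{t2} handles the cones of a common height while cones of distinct heights contribute separate direct factors. Throughout I use that in a $p$-group adjacency in $\mathcal{P}(G)$ is comparability of cyclic subgroups, and that a cyclic group has a unique subgroup of each order.

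First I would pin down the vertex set. Since $o(g)>o(b)$ forces $g\notin\langle b\rangle$, one checks that $N_G(b)-\langle b\rangle=\{g:\langle b\rangle\subsetneq\langle g\rangle\}$. Call $\overline{g_0}$ a \emph{root} if $o(g_0)=p\,o(b)$; by Lemma \ref{hal3}(2) each root is the single MEN-class $gen(\langle g_0\rangle)$, and $\langle g_0^{\,p}\rangle=\langle b\rangle$. Because $\langle g\rangle$ has a unique subgroup of order $p\,o(b)$, every $g$ with $\langle b\rangle\subsetneq\langle g\rangle$ sits over exactly one root, so the cones $C(g_0)=\{g:\langle g_0\rangle\subseteq\langle g\rangle\}=gen(\langle g_0\rangle)\sqcup(N_G(g_0)-\langle g_0\rangle)$ are pairwise disjoint, carry no edges between them, and are precisely the connected components of $\widehat{b}$. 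In each cone the apex $\overline{g_0}$ is adjacent to every vertex and is the unique dominating vertex; since closed neighbourhoods inside the cone differ from those inside $N_G(g_0)-\langle g_0\rangle$ only by the common apex, the MEN-quotients agree, and peeling the fixed apex (exactly as in the homocyclic theorem) gives $Aut(\overline{C(g_0)})\cong Aut(\overline{C(g_0)}-\overline{g_0})=Aut(\widehat{g_0})$.

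Next I would group the roots by height. By Lemma \ref{j1}, $Aut(\widehat{b})$ acts transitively on the roots of any fixed height, so the cones over roots of a common height are mutually isomorphic and are permuted among themselves; and since every graph automorphism preserves order, hence height, roots of different heights lie in different orbits and are never interchanged. Using the decomposition $K=\langle a\rangle\times L$ with $L=L_1\times\cdots\times L_{m-1}\times L_m^{\,p}$ and $M=L_1\times\cdots\times L_m$ from the proof of Lemma \ref{j1}, I would count the roots of each height: those whose height comes from the factor $L_i$ number $k_i=|H_{s_i}(\Omega_{t+1}(L))|$, with representative root $(x_i,c)$ where $x_i\in H_{s_i}(\Omega_{t+1}(L))$ and $c$ is the chosen element of order $p\,o(b)$ in $\langle a\rangle$ (so $c^p$ generates $\langle b\rangle$); the top class has $k_m=p^r$ roots with representative $c$, that is $(1,c)$.

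Finally I would assemble the pieces. Within the $i$-th height class there are $k_i$ mutually isomorphic cones permuted by $Aut(\widehat{b})$, so Theorem \ref{t2} yields the factor $Aut(\widehat{(x_i,c)})\wr S_{k_i}$ for $i<m$ and $Aut(\widehat{c})\wr S_{k_m}$ for the top class, where the second step replaces $Aut$ of a cone by $Aut$ of the corresponding $\widehat{\cdot}$. Because distinct height classes span disjoint collections of components and lie in different automorphism orbits, the whole group is the direct product of these wreath products, which is the asserted formula. I expect the main obstacle to be the counting in the previous paragraph: verifying that passage to the MEN-quotient and to the section $\Omega_{t+1}(L)$ counts exactly the index-$p$ cyclic overgroups of $\langle b\rangle$ of each height, giving $k_i=|H_{s_i}(\Omega_{t+1}(L))|$ and $k_m=p^r$, while Lemma \ref{j1} is what upgrades ``same height'' to ``genuinely isomorphic cone,'' so that Theorem \ref{t2} is actually applicable.
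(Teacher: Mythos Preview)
Your proposal is correct and follows essentially the same route as the paper: the paper's one-line proof simply records that $\widehat{b}$ is disconnected with a unique vertex $\overline{u}$ of order $p\,o(b)$ in each component, and then invokes Theorem~\ref{t2} and Lemma~\ref{j1}, which is exactly your cone decomposition, apex-peeling, and height-stratified application of the wreath-product theorem spelled out in detail. One phrasing to tighten: ``preserves order, hence height'' is not literally an implication (all roots share order $p\,o(b)$); what you mean---and what works---is that the height of a root equals the maximal order appearing in its cone divided by $p\,o(b)$, and that maximal order is a graph invariant preserved by every automorphism.
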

          \begin{proof}
          Since $\widehat{b}$ is not connected  and any component  has an unique element $\overline{u}$ of order $po(b)$, by Lemmas \ref{t2}, \ref{j1}, the result follows.
          \end{proof}

\begin{corollary} \label{abel}
 Let $G$ be abelian  $p$-group then 
                     
                     \begin{center}{
                     $Aut(\overline{\mathcal{P}(G)})$$=$$ \prod_{H_t(G)\neq \phi} (Aut(\widehat{\alpha(H_t(G)))}\wr S_{(|H_t(G)|-1)/(p-1)})$}
                     \end{center}
where $\alpha(K)$ is
an  element of prime order in $K$.
\end{corollary}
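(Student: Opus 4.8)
The plan is to work entirely inside the reduced weighted graph $\overline{\mathcal{P}(G)}$ and to exhibit its automorphism group as a direct product of wreath products, one factor for each height-type of minimal subgroup. First I would recall that in a $p$-group two nontrivial elements lie in the same connected component of $\mathcal{P}(G)$ exactly when their cyclic subgroups meet nontrivially; since in an abelian $p$-group every cyclic subgroup contains a unique subgroup of order $p$, this puts the connected components of $\mathcal{P}(G)$, hence of $\overline{\mathcal{P}(G)}$, in bijection with the subgroups of order $p$ of $G$. I would then sort these components by the common height of the generators of their minimal subgroup: if a minimal subgroup is generated by an element of height $p^{t-1}$, then all $p-1$ of its generators lie in $H_t(G)$, so the number of components of type $t$ is $n_t=|H_t(G)|/(p-1)$, and only those $t$ with $H_t(G)\neq\emptyset$ occur.

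Next I would pin down the automorphism group of a single component $C_x$, the component whose minimal subgroup is $\langle x\rangle$ with $o(x)=p$. The class $\overline{x}$ is the unique vertex of $C_x$ adjacent to every other vertex: every element of the component has $\langle x\rangle$ as its socle, so $x$ is a power of each of them, whereas no class of larger order can be universal once the component contains two distinct maximal cyclic subgroups, since a common lower power of both would lie in their intersection $\langle x\rangle$ (and when the component has only one maximal cyclic subgroup it reduces to a single weighted vertex, where the claim is trivial). Consequently $\overline{x}$ is fixed by every automorphism of $C_x$, and deleting it gives $Aut(C_x)\cong Aut(C_x-\overline{x})=Aut(\widehat{x})$, which is exactly the group computed in Corollary \ref{al4}.

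I would then assemble the pieces with Theorem \ref{t2}. Two components of the same type are isomorphic: by Corollary \ref{h1} the group $Aut(G)$ acts transitively on $H_t(G)$, and an element of $Aut(G)$ carrying one root to another preserves orders, heights and hence MEN-class sizes, so it induces a weighted isomorphism of the two components. Components of different types are \emph{not} isomorphic: the maximal order occurring in $C_x$ is exactly $p^t$ when the root has height $p^{t-1}$, since such a root is a $p^{t-1}$-th power of an element of order $p^t$, while an element of order $p^{t+1}$ in the component would force $x\in G^{p^t}$, and by Corollary \ref{ha1} this maximal order is recoverable from the weighted graph alone. Hence any automorphism of $\overline{\mathcal{P}(G)}$ permutes the components within each fixed type, the hypothesis of Theorem \ref{t2} holds type by type, and applying it to the $n_t$ mutually isomorphic components of type $t$ yields the factor $Aut(\widehat{\alpha(H_t(G))})\wr S_{n_t}$. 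Taking the direct product over all $t$ with $H_t(G)\neq\emptyset$ gives the stated formula.

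\textbf{The main obstacle} I expect is the cross-type rigidity: proving that no automorphism of the reduced graph can interchange components belonging to different heights. The transitivity and the wreath bookkeeping are routine given Corollaries \ref{h1} and \ref{al4} and Theorem \ref{t2}; the real content is that the height of the minimal subgroup is an intrinsic invariant of the weighted component. I would secure this through the order-recovery statement of Corollary \ref{ha1}, which lets me read off the top order $p^t$ in each component and thus its type; if that proved awkward I would instead argue directly that a type-$t$ component embeds into the reduced power graph of a cyclic group of order $p^t$ and into no smaller one, again isolating $t$ as a graph invariant.
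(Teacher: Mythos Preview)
Your approach is correct and is precisely the argument the paper leaves implicit: the paper states Corollary~\ref{abel} without proof, expecting the reader to rerun the pattern of Corollary~\ref{al4} one level down, using Corollary~\ref{h1} for within-type transitivity and Theorem~\ref{t2} for the wreath structure. Your write-up supplies more detail than the paper does, in particular the cross-type rigidity step; note that you can shorten that step, since in a non-cyclic abelian $p$-group every MEN-class is a $gen$ set by Lemma~\ref{l3}, so the weight $p^s-p^{s-1}$ already determines the order $p^s$ and hence the maximal order in a component, without needing Corollary~\ref{ha1}. Also, your count $n_t=|H_t(G)|/(p-1)$ is the correct one; the index $(|H_t(G)|-1)/(p-1)$ printed in the statement appears to be a typo.
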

Combining Theorems \ref{autgroup},  \ref{nil} and Corollaries  \ref{al4}, \ref{abel}, automorphism group of power graph of any abelian groups  can be computed.\\

%%%%%%%%%%%%%%%%%%%%%%%%%%%%%%%%%%%%%%%%%%%%%%%%%%%%%%%%%%%%%%%%%%%%%%%%%%%%%%%%%%%

%%%%%%%%%%%%%%%%%%%%%%%%%%%%%%%%%%%%%%%%%%%%%%%%%%%%%%%%%%%%%%%%%%%%%%%%%%%%%%%%%%%%%%%%

\end{document}